\newcolumntype{C}[1]{>{\centering\arraybackslash}m{#1}}
\newtheorem{theorem}{Theorem}
\newtheorem{lemma}{Lemma}
\numberwithin{equation}{section}
\newtheorem{remark}{Remark}
\newcommand{\bfs}[1]{{\boldsymbol #1}}
\journal{arXiv}
\begin{document}

\begin{frontmatter}

\title{Optimal spectral approximation of $2n$-order differential operators by mixed isogeometric analysis}

\author[ad,ad1]{Quanling Deng\corref{corr}}
\cortext[corr]{Corresponding author}
\ead{Quanling.Deng@curtin.edu.au}

\author[ad,ad1]{Vladimir Puzyrev}
\ead{Vladimir.Puzyrev@curtin.edu.au}

\author[ad,ad1,ad2]{Victor Calo}
\ead{Victor.Calo@curtin.edu.au}

\address[ad]{Applied Geology, Curtin University, Kent Street, Bentley, Perth, WA 6102, Australia}
\address[ad1]{Curtin Institute for Computation, Curtin University, Kent Street, Bentley, Perth, WA 6102, Australia}
\address[ad2]{Mineral Resources, Commonwealth Scientific and Industrial Research Organisation (CSIRO), Kensington, Perth, WA 6152, Australia}

\begin{abstract}
We approximate the spectra of a class of $2n$-order differential operators using isogeometric analysis in mixed formulations. This class includes a wide range of differential operators such as those arising in elliptic, biharmonic, Cahn-Hilliard, Swift-Hohenberg, and phase-field crystal equations. The spectra of the differential operators are approximated by solving differential eigenvalue problems in mixed formulations, which require auxiliary parameters. The mixed isogeometric formulation when applying classical quadrature rules leads to an eigenvalue error convergence of order $2p$ where $p$ is the order of the underlying B-spline space. We improve this order to be $2p+2$ by applying optimally-blended quadrature rules developed in \cite{puzyrev2017dispersion,calo2017dispersion} and this order is an optimum in the view of dispersion error. We also compare these results with the mixed finite elements and show numerically that mixed isogeometric analysis leads to significantly better spectral approximations.

\end{abstract}

\begin{keyword}
Isogeometric analysis \sep finite elements \sep differential operators \sep eigenvalue problem \sep spectral approximation \sep quadratures 
\end{keyword}

\end{frontmatter}


\section{Introduction} \label{sec:intr} 

The finite element method (FEM) is a widely used and highly effective numerical technique for approximate solutions of boundary value problems. The theory of FEM has been extensively developed during the last 60 years. Nowadays, many different variants of FEM are used for the solutions of various complex linear and nonlinear problems. A special group of FEM techniques is the \textit{mixed finite element methods} \cite{brezzi1974existence, malkus1978mixed, brezzi1991mixed, auricchio2004mixed, gatica2014simple}. The word ``mixed'' in this case indicates that there are extra functional spaces used to approximate different solution variables. Traditionally these could be the function and its gradient where each is approximated with a different discrete representation. The literature on mixed finite element methods is quite vast and ranges from the first studies in the 1970s by Brezzi \cite{brezzi1974existence}, Babu\v{s}ka \cite{babuvska1973finite}, Crouzeix and Raviart \cite{crouzeix1973conforming} to recent contributions \cite{cervera2010mixed, gatica2014simple, wang2014weak, chung2015mixed, john2017divergence}. A large amount of research has been devoted to various stabilization techniques for the mixed methods \cite{franca1988two, douglas1989absolutely, kechkar1992analysis, masud2002stabilized, dohrmann2004stabilized, cervera2010mixed} as well as their error estimates \cite{falk1980error, arnold1985mixed}.

The mixed formulation is naturally used for problems with two independent variables such as velocity and pressure in the Stokes equations. In other popular cases, the second variable is the first or the second derivative of the original variable and approximating it directly in the problem formulation has physical interest. For example, in elasticity, both the stress and the displacement can enter the formulation at the same time. Another example is the Darcy flow equation where the mixed variational formulation is posed in terms of the function spaces $L^2(\Omega) / \mathbb{R}$ and $\mathbf{H}(div,\Omega)$ for the pressure and velocity, respectively (though other options are possible as well, see, e.g., \cite{masud2002stabilized}). The mixed finite element framework allows to preserve mass conservation, a property that is important in fluid flow problems \cite{donea1982arbitrary, masud2002stabilized, hughes2005conservation} and makes the mixed methods competitive numerical techniques in many engineering applications. The mixed problem discretization leads to a linear algebraic system of the saddle point form
\begin{equation} \label{eq:mixed_system}
\begin{bmatrix}
  \mathbf{A} & \mathbf{B}^T \\
  \mathbf{B} & 0 \\
\end{bmatrix}
\begin{bmatrix}
  \mathbf{x} \\
  \mathbf{y} \\
\end{bmatrix}
=\\
\begin{bmatrix}
  \mathbf{f} \\
  \mathbf{g} \\
\end{bmatrix}.
\end{equation}

The mixed isogeometric analysis is a relatively unexplored topic. Again, most of the work in this area focuses on the fluid flow problems where various isogeometric formulations were applied to the Stokes problem \cite{buffa2011isogeometric, evans2013isogeometric, hoang2017mixed, sarmiento2017petiga}. Recent advances include mixed isogeometric formulations for elasticity \cite{duddu2012finite} and poromechanics \cite{dortdivanlioglu2018mixed, bekele2017mixed}. In this field, coupling the fluid pressure and the solid deformation, mixed isogeometric formulations violate the inf-sup condition and suffer from numerical instabilities in the incompressible and the nearly incompressible limit. To overcome the numerical instabilities, the projection methods \cite{elguedj2008and} or subdivision-stabilized NURBS discretization can be incorporated \cite{dortdivanlioglu2018mixed}.

The solution of high-order partial differential equations (PDEs) attracted a lot of attention in recent years. An important subclass is the $2n$-order PDE, which for $n=1,2,3$ reduces to different classical PDEs including Laplacian, Allen-Cahn, biharmonic, Cahn-Hilliard, Swift-Hohenberg, phase-field crystal and other problems. Previous work of numerical analysis typically focused on each of these problems separately (e.g., \cite{bleher1994existence, korzec2008stationary, verhoosel2011isogeometric, vignal2015energy}). Only a limited number of investigations exists for a general $2n$-order problem (e.g., for phase-field models \cite{vignal2017energy}). For example, non-degeneracy and uniqueness of its periodic solutions were studied in \cite{torres2013non}. 

Dispersion-minimizing methods based on modified integration rules for reducing dispersion error have been developed previously for classical FEM \cite{guddati2004modified, ainsworth2010optimally} and isogeometric analysis \cite{puzyrev2017dispersion, calo2017dispersion}. The dispersion error is reduced by blending two standard quadrature rules or using special quadrature rules \cite{deng2018dispersion}. For the standard finite and isogeometric elements, these optimal dispersion methods lead to two additional orders of error convergence (superconvergence) in the eigenvalues, while the eigenfunction errors do not degenerate.

In this paper, we utilize the mixed FEM framework for a general $2n$-order linear differential eigenvalue problem. We develop the mixed isogeometric framework for these eigenvalue problems and present error analysis for both eigenvalue and eigenfunctions. Optimal blending rules for the mixed isogeometric discretizations of the $2n$-order problem are presented up to $n=3$. 

The rest of this paper is organized as follows. Section 2 describes the differential eigenvalue problems under consideration. Section 3 presents the mixed isogeometric formulation. In Section 4, we present the optimally-blended rules and their eigenvalue error analysis. Section 5 shows numerical examples to demonstrate the performance of the method. Concluding remarks are given in Section 6.

\section{Problem statement} \label{sec:ps}
Let $\Omega \subset \mathbb{R}^d, d=1,2,3,$ be a bounded domain with Lipschitz boundary $\partial \Omega$. We consider a class of $2n$-order linear differential eigenvalue problems: Find $\lambda$ and non-zero $u$ satisfying 
\begin{equation} \label{eq:pde}
\mathcal{L}  u = \lambda u \quad \text{in} \quad \Omega
\end{equation}
and $u$ is subject to appropriate boundary conditions with the differential operator defined as
\begin{equation} \label{eq:operator}
\mathcal{L} = \sum_{m=0}^n a_m (-\Delta)^m,
\end{equation}
where $\Delta$ is the Laplacian and $a_m \in L^\infty(\Omega), m=0,1,\cdots, n$ with $n$ being a positive integer. For simplicity, we assume that $\Omega = [0,1]^d \in \mathbb{R}^d, d=1,2,3$ and $a_m$ are constants in the following discussions.

The operator $\mathcal{L}$ in the general equation \eqref{eq:pde} covers many high-order differential operators arising in sciences and engineering. In particular:
\begin{itemize}
\item For $n=1, a_0 = 0, a_1 = 1$, $\mathcal{L} = -\Delta$ and \eqref{eq:pde} reduces to the Laplacian (or linearized Allen-Cahn \cite{allen1979microscopic}) eigenvalue problem.
\item For $n=2, a_0 = a_1 = 0, a_2 = 1,$ $\mathcal{L} = \Delta^2$ and \eqref{eq:pde} becomes the biharmonic eigenvalue problem. 
\item For $n=2, a_0 = 0, a_1 = 1, a_2 = 1,$ $\mathcal{L} = \Delta^2 -\Delta$ and \eqref{eq:pde} is the linearized Cahn-Hilliard eigenvalue problem of fourth order \cite{cahn1958free}. 
\item For $n=2, a_0 = 1, a_1 = -2, a_2 = 1,$ $\mathcal{L} = (1+\Delta)^2$ and \eqref{eq:pde} is the linearized Swift-Hohenberg eigenvalue problem \cite{swift1977hydrodynamic}. 
\item For $n=3, a_0 = a_1 = 0, a_2=1, a_3 = 1,$ $\mathcal{L} = -\Delta (\Delta^2 -\Delta)$ and \eqref{eq:pde} is the linearized Cahn-Hilliard eigenvalue problem of sixth order \cite{savina2003faceting}.
\item For $n=3, a_0 = 0, a_1 = 1,a_2=-2, a_3 = 1,$ $\mathcal{L} = -\Delta(1+\Delta)^2$ and \eqref{eq:pde} is the linearized phase-field crystal eigenvalue problem \cite{elder2002modeling}. 
\end{itemize}

We focus on the interfacial energy operator for the phase-field models listed. We chose this eigenvalue approximation as the linearized bulk energy contribution can vary orders of magnitude and of sign, making a general analysis out of the scope of the present work. The differential eigenvalue problem \eqref{eq:pde} with constant coefficients $a_m, m=0, 1, \cdots, n,$ has the following eigenpairs $(\lambda_j, u_j), j=1,2,\cdots, $
\begin{equation} \label{eq:1dexactsol}
\begin{aligned}
\lambda_j & = a_n (j^2\pi^2)^n + a_{n-1} (j^2\pi^2)^{n-1} + \cdots + a_1 (j^2\pi^2)^1 + a_0,   \\
u_j & = C_1 \sin(j\pi x) + C_2 \cos(j\pi x)
\end{aligned}
\end{equation}
for 1D,
\begin{equation} \label{eq:2dexactsol}
\begin{aligned}
\lambda_{jl} & = a_n \big((j^2+l^2) \pi^2 \big)^n + \cdots + a_1 \big((j^2+l^2) \pi^2 \big)^1 + a_0,   \\
u_{jl} & = C_1 \sin(j\pi x)\sin(l\pi y) + C_2 \cos(j\pi x)\cos(l\pi y)
\end{aligned}
\end{equation}
for 2D, and
\begin{equation} \label{eq:3dexactsol}
\begin{aligned}
\lambda_{jlq} & = a_n \big((j^2+l^2+q^2) \pi^2 \big)^n + \cdots + a_1 \big((j^2+l^2+q^2) \pi^2 \big)^1 + a_0,   \\
u_{jlq} & = C_1 \sin(j\pi x)\sin(l\pi y)\sin(q\pi z) + C_2 \cos(j\pi x)\cos(l\pi y)\cos(q\pi z)
\end{aligned}
\end{equation}
for 3D with the constants $C_1, C_2 \in \mathbb{C}$ to be determined by the boundary conditions. If we impose homogeneous Dirichlet or Neumann boundary conditions, then we obtain that $C_2=0$ or $C_1=0$, respectively. Once one constant is determined, the other constant can be determined by normalization, that is, to normalize $u$ such that $(u, u)_{L^2(\Omega)} = 1$ where $(\cdot, \cdot)_{L^2(\Omega)} $ denotes the inner-product in $L^2(\Omega)$. In this paper, we consider homogeneous Dirichlet boundary conditions for the biharmonic eigenvalue problem (the case of a simply supported plate) and periodic boundary conditions for other eigenvalue problems.

\section{Mixed formulations}

To motivate the presentation of the mixed formulation for any $n$, we start with $n=2, a_0=a_1=0, a_2=1$ which \eqref{eq:pde} becomes the biharmonic eigenvalue problem. In this case, the differential equation \eqref{eq:pde} can be recast in a mixed form as a system of equation of second-order 
\begin{equation}\label{eq:pdee}
- \Delta u = \mu \qquad \text{and} \qquad - \Delta \mu = \lambda u
\end{equation}
and both $u$ and $\mu$ are subject to appropriate boundary conditions. This system of equations is referred to as problem with two unknown fields; see for example \cite{brezzi1974existence,ciarlet2002finite,falk1980error}. This new auxiliary parameter has physical meanings. For example, in structural mechanics, the new unknown $\mu$ represents the bending moment \cite{timoshenko1959theory}, while in fluid dynamics, when the Stokes equations for viscous steady flow is transformed using stream function this represents the vorticity \cite{andreev2005postprocessing}. 

If the domain $\Omega$ has smooth boundary or it is convex polygonal domain, then the eigenvalue problem \eqref{eq:pdee} has infinitely many solutions $(\lambda_j, (\mu_j, u_j))$ such that \cite{babuvska1991eigenvalue,davies1997lp,ishihara1978mixed}
\begin{equation}
\begin{aligned}
0  < \lambda_1 & \le \lambda_2 \le \lambda_3 \le \cdots < \infty, \\
\mu_j & = -\Delta u_j, \\
(u_j, u_k)_{L^2(\Omega)} & = \delta_{jk}, \qquad \forall j,k\ge1,
\end{aligned}
\end{equation}
with the Kronecker delta defined as $\delta_{jk} =1$ when $j=k$ and zero otherwise. Here, the eigenfunctions $u_j$ are orthonormal in $L^2(\Omega)$.

If $(\lambda_j, (\mu_j, u_j))$ is an eigenpair of \eqref{eq:pdee}, then $(\lambda_j, u_j)$ is an eigenpair of \eqref{eq:pde} and $\mu = -\Delta u.$ Hence the regularity of $(\mu_j, u_j)$ can be inferred from the regularity properties of problem \eqref{eq:pde}; see for example \cite{andreev2005postprocessing}.

\subsection{Mixed formulation at continuous level}
For arbitrary positive integer $n$, we set
\begin{equation} \label{eq:sub}
\psi^m = - \Delta \psi^{m-1},
\end{equation}
for $m = 1,2,\cdots, n-1$ with $\psi^0 =  u$.
These auxiliary unknowns allow us to recast the differential equation \eqref{eq:pde} into the mixed form as a system of equations of second-order
\begin{equation} \label{eq:mixedpde}
\begin{aligned}
- \Delta \psi^{m-1} - \psi^m & = 0, \ m=1, 2, \cdots, n-1, \\
- a_n \Delta \psi^{n-1} + \sum_{m=0}^{n-1} a_m \psi^m & = \lambda u.
\end{aligned}
\end{equation}

Similarly, we expect that if $(\lambda_j, (\psi^1_j, \psi^2_j, \cdots, \psi^{n-1}_j, u_j))$ is an eigenpair of \eqref{eq:mixedpde}, then $(\lambda_j, u_j)$ is an eigenpair of \eqref{eq:pde} with $\psi^m_j = -\Delta \psi^{m-1}_j$ as in \eqref{eq:sub}. The regularity of $(\psi^1_j, \psi^2_j, \cdots, \psi^{n-1}_j, u_j)$ can be inferred from the regularity properties of problem \eqref{eq:pde} and we assume sufficient regularity of the problem \eqref{eq:pde}.

Now, we present the mixed variational formulation for \eqref{eq:pde} at the continuous level. We denote the standard $L^2(\Omega)$-norm as $\| \cdot \|_{0,\Omega} \equiv \| \cdot \| \equiv \| \cdot \|_{L^2(\Omega)}$. We adopt the standard Sobolev spaces of integer index $s$, $H^s(\Omega)$ and $H^s_0(\Omega)$, equipped with the norm $\| \cdot \|_{s,\Omega} \equiv \| \cdot \|_{H^s(\Omega)}$; see \cite{adams1975sobolev,ciarlet2002finite}.

We define the bilinear forms
\begin{equation} \label{eq:bilinearforms}
a(v, w) = \int_\Omega  \nabla v \cdot \nabla w \ \text{d} x,  \qquad  b(v, w) = \int_\Omega  v  w \ \text{d} x,
\end{equation}
where $\nabla$ is the gradient operator. The bilinear form $a(\cdot, \cdot)$ and $b(\cdot, \cdot)$ is usually referred as stiffness and mass, respectively. 
These bilinear forms can be written alternatively as
\begin{equation}
a(v, w) = (\nabla v, \nabla w)_{L^2(\Omega)},  \qquad  b(v, w) = ( v, w)_{L^2(\Omega)}.
\end{equation}

At the continuous level, for simplicity, we assume that the differential equation \eqref{eq:pde} is subject to simply supported boundary conditions
\begin{equation} \label{eq:ssbdry}
u = \Delta u = \Delta^2 u = \cdots = \Delta^{n-1} u = 0.
\end{equation}
The mixed formulations for \eqref{eq:pde} or \eqref{eq:mixedpde} is: Find the eigenpairs $(\lambda, (\psi^1, \psi^2, \cdots, \psi^{n-1}, u))$ with $\lambda \in \mathbb{R}$ and $\psi^m, u \in H_0^1(\Omega), m=1, 2, \cdots, n-1,$ satisfying
\begin{equation} \label{eq:mf}
\begin{aligned}
a(\psi^{m-1}, w^m) - b(\psi^m, w^m)  & = 0, \ m=1, 2, \cdots, n-1,  \quad \forall w^m \in H_0^1(\Omega), \\
a_n \ a(\psi^{n-1}, v) + \sum_{m=0}^{n-1} a_m \ b(\psi^m, v) & = \lambda b(u, v),  \quad \forall v \in H_0^1(\Omega). \\
\end{aligned}
\end{equation}

\begin{remark}
One can also consider other boundary conditions. For example, for $n=2, a_0=a_1=0, a_2=1$, with homogeneous Dirichlet boundary conditions
\begin{equation} \label{eq:bdrycp}
u = \frac{\partial u}{\partial \bfs{n}} = 0 \quad \text{on} \quad \partial\Omega,
\end{equation}
the mixed variational formulation of the corresponding \eqref{eq:pde} is: Find the eigenpairs $(\lambda, (\psi^1, u))$ with $\lambda \in \mathbb{R}, \psi^1 \in H^1(\Omega)$, and $u \in H_0^1(\Omega)$ satisfying
\begin{equation} \label{eq:n2mf}
\begin{aligned}
a(u, w) - b(\psi^1, w)  & = 0,  \quad \forall w \in H^1(\Omega), \\
a(\psi^1, v) & = \lambda b(u, v),  \quad \forall v \in H_0^1(\Omega). \\
\end{aligned}
\end{equation}
Herein, $\bfs{n}$ denotes the outward unit normal to the boundary $\partial \Omega$. We refer the readers to \cite{andreev2005postprocessing} for details.

\end{remark}

\subsection{Mixed formulation at discrete level}
At discrete level, we specify a finite dimensional approximation space $V_h^p \subset H^1_0(\Omega)$ where $V_h^p = \text{span} \{\phi_a^p\}$ is the span of the B-spline or Lagrange (for FEM) basis functions $\phi_a^p$ of order $p$. Consequently, the mixed isogeometric analysis (or FEM) of \eqref{eq:pde} with simply supported boundary conditions \eqref{eq:ssbdry} is to seek $\lambda^h \in \mathbb{R}$ and $\psi^m_h, u_h \in V_h^p, m=1, 2, \cdots, n-1,$ satisfying
\begin{equation} \label{eq:vfh}
\begin{aligned}
a(\psi^{m-1}_h, w^m_h) - b(\psi^m_h, w^m_h)  & = 0, \ m=1, 2, \cdots, n-1,  \quad \forall w^m_h \in V_h^p, \\
a_n \ a(\psi^{n-1}_h, v_h) + \sum_{m=0}^{n-1} a_m \ b(\psi^m_h, v_h) & = \lambda^h b(u_h, v_h),  \quad \forall v_h \in V_h^p. \\
\end{aligned}
\end{equation}

The definition of the B-spline basis functions in one dimension is as follows. 
Let $X = \{x_0, x_1, \cdots, x_m \}$ be a knot vector with knots $x_j$, that is, a nondecreasing sequence of real numbers which are called knots.  The $j$-th B-spline basis function of degree $p$, denoted as $\theta^j_p(x)$, is defined as \cite{de1978practical,piegl2012nurbs}
\begin{equation} \label{eq:B-spline}
\begin{aligned}
\theta^j_0(x) & = 
\begin{cases}
1, \quad \text{if} \ x_j \le x < x_{j+1} \\
0, \quad \text{otherwise} \\
\end{cases} \\ 
\theta^j_p(x) & = \frac{x - x_j}{x_{j+p} - x_j} \theta^j_{p-1}(x) + \frac{x_{j+p+1} - x}{x_{j+p+1} - x_{j+1}} \theta^{j+1}_{p-1}(x).
\end{aligned}
\end{equation}

In this paper, for isogeometric analysis, we utilize the B-splines on uniform tensor-product meshes with non-repeating knots, that is, the B-splines with maximum continuity on uniform meshes, while for finite element method, we utilize the standard Lagrange basis functions. 
We approximate the eigenfunctions as a linear combination of the B-spline (or Lagrange) basis functions and substitute all the basis functions for $V_h^p$ in \eqref{eq:vfh}. This leads to a matrix eigenvalue problem, which is then solved numerically. We give more details of the structures of matrix eigenvalue problem in the following.

\subsection{Quadrature rules}
In practice, we evaluate the integrals involved in the bilinear forms $a(\cdot, \cdot) $ and $b(\cdot, \cdot)$ numerically, that is using quadrature rules. On a reference element $\hat K$, a quadrature rule is of the form
\begin{equation} \label{eq:qr}
\int_{\hat K} \hat f(\hat{\boldsymbol{x}}) \ \text{d} \hat{\boldsymbol{x}} \approx \sum_{l=1}^{N_q} \hat{\varpi}_l \hat f (\hat{n_l}),
\end{equation}
where $\hat{\varpi}_l$ are the weights, $\hat{n_l}$ are the nodes, and $N_q$ is the number of quadrature points. For each element $K$, we assume that there is an invertible map $\sigma$ such that $K = \sigma(\hat K)$, which leads to the correspondence between the functions on $K$ and $\hat K$. Assuming $J_K$ is the corresponding Jacobian of the mapping, \eqref{eq:qr} induces a quadrature rule over the element $K$ given by
\begin{equation} \label{eq:q}
\int_{K}  f(\boldsymbol{x}) \ \text{d} \boldsymbol{x} \approx \sum_{l=1}^{N_q} \varpi_{l,K} f (n_{l,K}),
\end{equation}
where $\varpi_{l,K} = \text{det}(J_K) \hat \varpi_l$ and $n_{l,K} = \sigma(\hat n_l)$. 
For simplicity, we denote by $G_l$ the $l-$point Gauss-Legendre quadrature rule,  by $L_l$ the $l-$point Gauss-Lobatto quadrature rule, 
by $R_l$ the $l-$point Gauss-Radau quadrature rule, 
and by $O_p$ the optimal blending scheme for the $p$-th order isogeometric analysis with maximum continuity. In one dimension, $G_l, L_l$, and $R_l$ fully integrate polynomials of order $2l-1, 2l-3,$ and $2l-2$, respectively \cite{bartovn2016optimal,bartovn2016gaussian,bartovn2017gauss}.  

Applying quadrature rules to \eqref{eq:vfh}, we obtain the approximate form
\begin{equation} \label{eq:vfho}
\begin{aligned}
a_h(\psi^{m-1}_h, w^m_h) - b_h(\psi^m_h, w^m_h)  & = 0, \ m=1, 2, \cdots, n-1,  \quad \forall w^m_h \in V_h^p, \\
a_n \ a_h(\psi^{n-1}_h, v_h) + \sum_{m=0}^{n-1} a_m \ b_h(\psi^m_h, v_h) & = \tilde \lambda^h b_h(u_h, v_h),  \quad \forall v_h \in V_h^p, \\
\end{aligned}
\end{equation}
where for $w,v \in V_h^p$
\begin{equation} \label{eq:ba}
 a_h(w, v) = \sum_{K \in \mathcal{T}_h} \sum_{l=1}^{N_q^1} \varpi_{l,K}^{(1)} \nabla w (n_{l,K}^{(1)} ) \cdot \nabla v (n_{l,K}^{(1)} )
\end{equation}
and
\begin{equation} \label{eq:bb}
 b_h(w, v) = \sum_{K \in \mathcal{T}_h} \sum_{l=1}^{N_q^2} \varpi_{l,K}^{(2)} w (n_{l,K}^{(2)} ) v (n_{l,K}^{(2)} )
\end{equation}
with $\{\varpi_{l,K}^{(1)}, n_{l,K}^{(1)} \}$ and $\{\varpi_{l,K}^{(2)}, n_{l,K}^{(2)} \}$ specifying two (possibly different) quadrature rules. 
Using quadrature rules, we can write the matrix eigenvalue problem as
\begin{equation} \label{eq:mevp}
  \begin{bmatrix}
  \bfs{K} & -\bfs{M} &   \bfs{0} &   \bfs{0} &   \bfs{0} \\
  \bfs{0} &  \bfs{K} & -\bfs{M} &   \bfs{0} &   \bfs{0} \\
  \vdots & \vdots &  \vdots & \ddots &  \vdots \\
 \bfs{0} &   \bfs{0} &    \cdots&  \bfs{K} & -\bfs{M} \\
 a_0 \bfs{M} &  a_1 \bfs{M} &  \cdots & a_{n-2} \bfs{M} &  a_{n-1} \bfs{M} + a_n \bfs{K}
  \end{bmatrix}
  \begin{bmatrix}
  \bfs{U} \\ 
  \bfs{\Psi^1} \\
    \vdots \\
   \bfs{\Psi^{n-2}} \\
   \bfs{\Psi^{n-1}} \\
  \end{bmatrix}
 = \tilde \lambda^h 
  \begin{bmatrix}
  \bfs{0} & \bfs{0} \\ 
  \bfs{M} & \bfs{0}
  \end{bmatrix}
    \begin{bmatrix}
  \bfs{U} \\ 
  \bfs{\Psi^1} \\
    \vdots \\
   \bfs{\Psi^{n-2}} \\
   \bfs{\Psi^{n-1}} \\
  \end{bmatrix},
\end{equation}
where $\bfs{K}_{ab} =  a_h(\phi_a^p, \phi_b^p), \bfs{M}_{ab} = b_h(\phi_a^p, \phi_b^p),$ and $\bfs{U}, \bfs{\Psi^j}, j=1,\cdots, n-1$ are the corresponding representation of the eigenvector as the coefficients of the basis functions. Similar to the standard second-order eigenvalue problem, $\bfs{K}$ and $\bfs{M}$ are referred to as the stiffness and mass matrices resulting from \eqref{eq:bilinearforms}, respectively. This matrix eigenvalue problem \eqref{eq:mevp} has a similar structure with the one obtained by hybrid high-order discretization; see \cite[Eqn. 3.13]{calo2017spectral}.

\subsection{Optimally blended quadrature rules} \label{sec:optrules}
The optimally-blended rules are developed and analyzed for isogeometric analysis in \cite{puzyrev2017dispersion,calo2017dispersion,deng2018dispersion,bartovn2017generalization} for $p \le 7$ and generalized to arbitrary order $p$ in \cite{deng2017dispersion}. We denote the following blendings of Gauss-Legendre rule $G_p, G_{p+1}$ and Gauss-Lobatto rule $L_{p+1}$ as
\begin{equation} \label{eq:tau}
\tau_{gg} G_{p+1} + (1-\tau_{gg}) G_p,  \qquad \tau_{gl} G_{p+1} + (1-\tau_{gl}) L_{p+1}, 
\end{equation}
where $\tau_{gg}$ and $\tau_{gl}$ are blending parameters.
\begin{table}[ht]
\centering 
\begin{tabular}{| c || c | c | c | c | c | }
\hline
$p$ & 1 &  2  & 3 & 4  \\[0.1cm] \hline
$\tau_{gg}$  & 2 & 2 & $\frac{13}{3}$ & 22 \\[0.1cm] \hline
$\tau_{gl}$ & $\frac{1}{2}$ & $\frac{1}{3}$ & $-\frac{3}{2}$ & $-\frac{79}{5}$ \\[0.1cm] \hline
\end{tabular}
\caption{Optimal blending parameters.} 
\label{tab:tau} 
\end{table}

Table \ref{tab:tau} shows the optimal blending parameters for $p\le 4$; see also \cite{calo2017dispersion,deng2017dispersion}. For optimal blending parameters of higher order $p$ and blending among other quadrature rules, we refer to \cite{deng2017dispersion}. These optimally-blended quadrature rules improve spectrum errors significantly, which we show in the next section.

\section{Eigenvalue errors}
In this section, following the eigenvalue error estimates established in \cite{deng2017dispersion,puzyrev2017dispersion,calo2017dispersion}  for the second-order Laplacian eigenvalue problem, we derive a priori eigenvalue error estimates for \eqref{eq:mevp}, which  can be viewed as a generalization to the $2n$-order differential eigenvalue problems. The mixed formulation helps us deliver the optimal error convergence rates for the biharmonic eigenvalue problem.


Following the structure in \cite{deng2017dispersion}, 
we seek an approximation of eigenfunction $u_h$ in the form 
\begin{equation} \label{eq:lc}
\sum_{j = 0}^N U^j \theta^j_p(x),
\end{equation}
where $U^j$ are the unknown coefficients which corresponds to the the $p$-th order polynomial approximation which are to be determined, that is the component of the unknown vector $\bfs{U}$ in \eqref{eq:mevp}.
Using the Bloch wave assumption \cite{odeh1964partial}, we write 
\begin{equation} \label{eq:bloch}
U^j = e^{ij \omega h },
\end{equation}
 where $i^2 = -1$ and $\omega$ is an approximated frequency. In the view of auxiliary fields \eqref{eq:sub}, we denote $\psi^m_h = u_h$ and $\bfs{\Psi}^m = \bfs{U}$ when $m=0.$ In the Bloch wave assumption \eqref{eq:bloch}, $jh$ resembles the spatial variable $x$. Using the auxiliary fields defined \eqref{eq:sub}, this allows us further assume Bloch wave solutions for their derivatives, that is
\begin{equation} \label{eq:blochdiff}
\Psi^{m,j} = \omega^{2m} e^{ij \omega h }, m=0,1, 2, \cdots, n-1,
\end{equation}
where $\Psi^{m,j}$ is the component of the unknown vector $\bfs{\Psi}^m$ in \eqref{eq:mevp}. We observe that \eqref{eq:blochdiff} recovers \eqref{eq:bloch} when $m=0$.

The $C^{p-1}$ B-spline basis function $\theta^j_p$ has a support over $p+1$ elements. Thus, for $m=0,1, 2, \cdots, n-1,$ we have
\begin{equation}
\begin{aligned}
a_h(\psi^m_h, \theta^j_p) & = a_h \Big(\sum_{k, |k-j| \le p} \Psi^{m,j}_p \theta^k_p, \theta^j_p \Big) = A_p\Psi^m_p/h, \\
b_h(\psi^m_h, \theta^j_p) & = b_h \Big(\sum_{k, |k-j| \le p} \Psi^{m,j}_p \theta^k_p, \theta^j_p \Big) = B_p\Psi^m_p \ h, \\
\end{aligned}
\end{equation}
where $a_h(\cdot, \cdot), b_h(\cdot, \cdot)$ approximate (or exactly-integrate when we use appropriate quadrature rules) bilinear forms and
\begin{equation}
\begin{aligned}
\Psi^m_p & = [\Psi^{m,j-p}_p \quad \Psi^{m,j-p+1}_p \quad \cdots \quad \Psi^{m,j}_p \quad \cdots \quad \Psi^{m,j+p-1}_p \quad \Psi^{m,j+p}_p ]^T, \\
A_p & = [A^{j-p}_p \quad A^{j-p+1}_p \quad \cdots \quad A^{j}_p \quad \cdots \quad A^{j+p-1}_p \quad A^{j+p}_p ], \\
B_p & = [B^{j-p}_p \quad B^{j-p+1}_p \quad \cdots \quad B^{j}_p \quad \cdots \quad B^{j+p-1}_p \quad B^{j+p}_p ], \\
\end{aligned}
\end{equation}
with 
\begin{equation} \label{eq:ahbh}
A^{j-k}_p = a_h(\theta^{j-k}_p, \theta^j_p)h, \qquad B^{j-k}_p = b_h(\theta^{j-k}_p, \theta^j_p)/h
\end{equation}
 for $k=p, p-1, \cdots, -p$. For $m=0,$ we also denote $\Psi^m_p = U_p$ with
 \begin{equation}
 U_p = [U^{j-p}_p \quad U^{j-p+1}_p \quad \cdots \quad U^{j}_p \quad \cdots \quad U^{j+p-1}_p \quad U^{j+p}_p ]^T.
 \end{equation}

 The symmetry of the B-spline basis functions (on uniform meshes and away from the boundaries) further implies that 
\begin{equation} \label{eq:symm}
A^{j-k}_p = A^{j+k}_p, \qquad B^{j-k}_p = B^{j+k}_p.
\end{equation}

Thus, using this symmetry, Euler's formula, Bloch wave assumptions \eqref{eq:bloch} and \eqref{eq:blochdiff}, we can deduce the following
\begin{equation} \label{eq:abeh}
\begin{aligned}
a_h(\psi^m_h, \theta^j_p) & = A_p\Psi^m_p/h = \omega^{2m} \big(A^j_p + 2\sum_{k=1}^p A^{j+k}_p \cos(k\omega h) \big) e^{ij \omega h } / h = \omega^{2m} A_pU_p/h, \\
b_h(\psi^m_h, \theta^j_p) & = B_p\Psi^m_p \ h = \omega^{2m}  \big(B^j_p + 2\sum_{k=1}^p B^{j+k}_p \cos(k\omega h) \big) e^{ij \omega h } h = \omega^{2m} B_pU_p \ h. \\
\end{aligned}
\end{equation}


\begin{lemma} \label{lem:abpower}
Denote $\Lambda = \omega h.$ For any positive integer $p$, 
denoting
\begin{equation}
C_{p+2} = 2 (-1)^p \Big( \sum_{k=1}^p \frac{k^{2p+4}}{(2p+4)!} A^{j+k}_p  + \frac{k^{2p+2}}{(2p+2)!} B^{j+k}_p  \Big),
\end{equation}
there holds
\begin{equation}
\begin{aligned}
\frac{A_p U_p }{B_p U_p} = \Lambda^2 + C_{p+2} \Lambda^{2p+4} + \mathcal{O}(\Lambda^{2p+6}),
\end{aligned}
\end{equation}
when $a_h(\cdot, \cdot)$ and $b_h(\cdot, \cdot)$ in \eqref{eq:ahbh} are approximated using the optimally blended quadrature rules.
\end{lemma}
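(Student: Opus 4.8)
The plan is to regard Lemma~\ref{lem:abpower} as a statement about the Taylor expansion of a ratio of two even trigonometric polynomials, and to reduce it to the dispersion analysis already carried out for the second-order problem in \cite{puzyrev2017dispersion,calo2017dispersion,deng2017dispersion}. First I would cancel the common factor $e^{ij\omega h}$ in \eqref{eq:abeh} and write $\frac{A_pU_p}{B_pU_p}=\mathcal{A}(\Lambda)/\mathcal{B}(\Lambda)$ with $\mathcal{A}(\Lambda)=A^j_p+2\sum_{k=1}^p A^{j+k}_p\cos(k\Lambda)$ and $\mathcal{B}(\Lambda)=B^j_p+2\sum_{k=1}^p B^{j+k}_p\cos(k\Lambda)$; note that this ratio is exactly the discrete dispersion symbol of the Laplacian eigenvalue problem for the same spline space and the same quadrature, because the auxiliary-field factors $\omega^{2m}$ of \eqref{eq:blochdiff} cancel. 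Expanding $\cos(k\Lambda)=\sum_{\ell\ge0}(-1)^\ell(k\Lambda)^{2\ell}/(2\ell)!$ gives $\mathcal{A}(\Lambda)=\sum_{\ell\ge0}\alpha_{2\ell}\Lambda^{2\ell}$ and $\mathcal{B}(\Lambda)=\sum_{\ell\ge0}\beta_{2\ell}\Lambda^{2\ell}$ with $\alpha_0=A^j_p+2\sum_{k=1}^p A^{j+k}_p$, $\alpha_{2\ell}=\frac{2(-1)^\ell}{(2\ell)!}\sum_{k=1}^p k^{2\ell}A^{j+k}_p$ for $\ell\ge1$, and the analogous formulas for the $\beta_{2\ell}$.

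The second step is to record the moment identities satisfied by the blended entries $A^{j+k}_p,B^{j+k}_p$ of \eqref{eq:ahbh}. Summing over $k$ and using the partition of unity $\sum_{k=-p}^p\theta^{j+k}_p\equiv1$ on the support of $\theta^j_p$, together with exactness of the rule on constants and the normalization $\int_\Omega\theta^j_p=h$, gives $\alpha_0=a(1,\theta^j_p)=0$ and $\beta_0=\frac1h\,b_h(1,\theta^j_p)=1$. The heart of the matter is that the optimally-blended rules are designed precisely so that $\alpha_{2\ell}=\beta_{2\ell-2}$ for $\ell=1,\dots,p+1$: the identities for $\ell\le p$ are the ones responsible for the $\mathcal{O}(h^{2p})$ eigenvalue accuracy of the $p$-th order scheme and follow from the polynomial-reproduction properties of the $C^{p-1}$ B-splines as in \cite{puzyrev2017dispersion,calo2017dispersion,deng2017dispersion}, while the extra identity $\alpha_{2p+2}=\beta_{2p}$ is, by construction, the equation solved by the blending parameters $\tau_{gg},\tau_{gl}$ of Table~\ref{tab:tau} (equivalently, the condition annihilating the $\Lambda^{2p+2}$ term). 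I would also observe here that \eqref{eq:symm} continues to hold for the blended entries because every rule appearing in \eqref{eq:tau} is symmetric on the uniform interior mesh, which is what licenses writing $\mathcal{A},\mathcal{B}$ as cosine rather than general trigonometric polynomials.

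With these identities in hand the remainder is bookkeeping. Using $\alpha_{2\ell}=\beta_{2\ell-2}$ for $\ell=1,\dots,p+1$ I would factor $\mathcal{A}(\Lambda)=\Lambda^2\sum_{\ell=0}^{p}\beta_{2\ell}\Lambda^{2\ell}+\alpha_{2p+4}\Lambda^{2p+4}+\mathcal{O}(\Lambda^{2p+6})=\Lambda^2\mathcal{B}(\Lambda)+(\alpha_{2p+4}-\beta_{2p+2})\Lambda^{2p+4}+\mathcal{O}(\Lambda^{2p+6})$, and since $\mathcal{B}(\Lambda)=1+\mathcal{O}(\Lambda^2)$ this yields $\frac{A_pU_p}{B_pU_p}=\Lambda^2+(\alpha_{2p+4}-\beta_{2p+2})\Lambda^{2p+4}+\mathcal{O}(\Lambda^{2p+6})$. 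Substituting $\alpha_{2p+4}=\frac{2(-1)^{p+2}}{(2p+4)!}\sum_{k=1}^p k^{2p+4}A^{j+k}_p$ and $\beta_{2p+2}=\frac{2(-1)^{p+1}}{(2p+2)!}\sum_{k=1}^p k^{2p+2}B^{j+k}_p$, and using $(-1)^{p+2}=(-1)^p$ and $(-1)^{p+1}=-(-1)^p$, gives $\alpha_{2p+4}-\beta_{2p+2}=2(-1)^p\sum_{k=1}^p\big(\frac{k^{2p+4}}{(2p+4)!}A^{j+k}_p+\frac{k^{2p+2}}{(2p+2)!}B^{j+k}_p\big)=C_{p+2}$, which is the claim.

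The step I expect to be the real obstacle is the second one: verifying that the blended quadrature keeps the low-order coefficients $\alpha_{2\ell}$ ($\ell\le p$) locked to $\beta_{2\ell-2}$ and that the tabulated $\tau$ removes the order-$\Lambda^{2p+2}$ discrepancy. The efficient route is not to reprove this but to invoke it: because $A_pU_p/(B_pU_p)$ is literally the Laplacian dispersion symbol for the chosen space and rule, the eigenvalue-error estimates and the blending-parameter derivations of \cite{puzyrev2017dispersion,calo2017dispersion,deng2017dispersion} transfer without change, and the only content genuinely specific to the present lemma is the closed-form identification of the surviving leading coefficient $C_{p+2}$ performed in the third step.
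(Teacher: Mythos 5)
Your proposal is correct and follows essentially the same route as the paper: the paper omits the proof and simply invokes Lemmas 6 and 7 of \cite{deng2017dispersion}, which are precisely the moment identities $\alpha_{2\ell}=\beta_{2\ell-2}$ (for $\ell\le p$ from the spline/quadrature structure, and for $\ell=p+1$ from the optimal blending parameter) that you identify as the heart of the matter and likewise propose to import rather than reprove. The only addition in your write-up is the explicit cosine-expansion bookkeeping that factors $\mathcal{A}(\Lambda)=\Lambda^2\mathcal{B}(\Lambda)+(\alpha_{2p+4}-\beta_{2p+2})\Lambda^{2p+4}+\mathcal{O}(\Lambda^{2p+6})$ and identifies $\alpha_{2p+4}-\beta_{2p+2}=C_{p+2}$, which is consistent with the stated coefficient.
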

\begin{proof}
We omit the proof as the identity is proved using the same arguments in the proof of Lemma 6 supplied with Lemma 7 in the paper \cite{deng2017dispersion}.
\end{proof}

\begin{theorem} \label{thm:main}
Assuming $a_h(\cdot, \cdot)$ and $b_h(\cdot, \cdot)$ in \eqref{eq:ahbh} are approximated using the optimally blended quadrature rules, there holds 
\begin{equation}
\begin{aligned}
\tilde \lambda^h - \sum_{k=0}^n a_k  \omega^{2k} =  \Big(C_{p+2} \sum_{k=1}^n a_k  \omega^{2k} \Big) (\omega h)^{2p+2} + \mathcal{O}(h)^{2p+4}.
\end{aligned}
\end{equation}
\end{theorem}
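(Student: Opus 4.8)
The plan is to translate the matrix eigenvalue problem \eqref{eq:mevp} into a scalar dispersion relation via the Bloch wave ansatz and then feed in Lemma \ref{lem:abpower}. First I would substitute the Bloch wave assumptions \eqref{eq:bloch} and \eqref{eq:blochdiff} into the system \eqref{eq:vfho}, row by row. The first $n-1$ rows read $a_h(\psi^{m-1}_h,\theta^j_p) - b_h(\psi^m_h,\theta^j_p) = 0$; using \eqref{eq:abeh} this becomes $\omega^{2(m-1)} A_p U_p/h = \omega^{2m} B_p U_p\, h$, i.e. $A_p U_p/(B_p U_p) = \omega^2 h^2 = \Lambda^2$ at leading order. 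This is \emph{consistent} with the ansatz \eqref{eq:blochdiff} precisely because Lemma \ref{lem:abpower} tells us $A_p U_p/(B_p U_p)$ equals $\Lambda^2$ up to a correction of order $\Lambda^{2p+4}$; so the ansatz is accurate to that order, and I would carry the correction term through explicitly rather than treating $\Lambda^2$ as exact.

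Next I would handle the last row of \eqref{eq:mevp}, which after the substitution reads
\begin{equation}
a_n\, \omega^{2(n-1)} A_p U_p/h + \sum_{m=0}^{n-1} a_m\, \omega^{2m} B_p U_p\, h = \tilde\lambda^h\, B_p U_p\, h .
\end{equation}
Dividing through by $B_p U_p\, h$ gives
\begin{equation}
\tilde\lambda^h = a_n\, \omega^{2(n-1)} \frac{A_p U_p}{B_p U_p \, h^2} + \sum_{m=0}^{n-1} a_m\, \omega^{2m}
= a_n\, \omega^{2(n-1)} \frac{1}{h^2}\Big(\Lambda^2 + C_{p+2}\Lambda^{2p+4} + \mathcal{O}(\Lambda^{2p+6})\Big) + \sum_{m=0}^{n-1} a_m\, \omega^{2m},
\end{equation}
using Lemma \ref{lem:abpower} for the ratio. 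Since $\Lambda = \omega h$, the leading term is $a_n\,\omega^{2(n-1)}\omega^2 = a_n\omega^{2n}$, and the correction is $a_n\,\omega^{2(n-1)} C_{p+2}\,\omega^2 (\omega h)^{2p+2} = a_n\,\omega^{2n} C_{p+2}(\omega h)^{2p+2}$. Combining with the sum over $m$ and writing $\omega^{2n}$ as the $k=n$ term, I get $\tilde\lambda^h = \sum_{k=0}^n a_k\omega^{2k} + a_n\omega^{2n} C_{p+2}(\omega h)^{2p+2} + \mathcal{O}(h^{2p+4})$.

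The remaining, and genuinely necessary, step is to account for the fact that each of the $n-1$ constraint equations also carries a correction of the same size. More carefully, in the intermediate rows the ansatz $\Psi^{m,j} = \omega^{2m} e^{ij\omega h}$ is not exact; solving the constraints recursively, $B_p U_p^{(m)} = (A_p U_p^{(m-1)})/(h^2 B_p \cdot \,)$-style relations accumulate a factor $(1 + C_{p+2}\Lambda^{2p+2} + \cdots)$ at each of the $n-1$ levels, so that the effective multiplier entering the last equation for the $\psi^m$-term is $\omega^{2m}(1 + m\, C_{p+2}\Lambda^{2p+2} + \cdots)$ rather than $\omega^{2m}$. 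Propagating this through the last row replaces $\sum_{k=1}^n a_k\omega^{2k}$ (the $k=0$ term has $m=0$ and picks up no correction) with $\sum_{k=1}^n a_k\omega^{2k}(1 + \#\, C_{p+2}\Lambda^{2p+2})$; the bookkeeping of these combinatorial coefficients is exactly what produces the stated factor $C_{p+2}\sum_{k=1}^n a_k\omega^{2k}$ multiplying $(\omega h)^{2p+2}$. The main obstacle is therefore not any hard estimate — Lemma \ref{lem:abpower} does all the analytic work — but the careful propagation of the $\mathcal{O}(\Lambda^{2p+2})$ relative error through the chain of $n-1$ auxiliary variables and verifying that the coefficients telescope into $\sum_{k=1}^n a_k\omega^{2k}$; I would do this by induction on $n$, checking the base cases $n=1$ (where it reduces to the known Laplacian result of \cite{deng2017dispersion}) and $n=2$ explicitly.
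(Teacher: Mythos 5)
Your first two paragraphs essentially retrace the paper's computation, but the decisive third paragraph contains the gap. You correctly observe that the ansatz \eqref{eq:blochdiff} is not exactly consistent with the discrete constraint rows once quadrature is applied, and that enforcing the constraints exactly makes the effective amplitude of the $m$-th auxiliary field $\omega^{2m}\bigl(1+m\,C_{p+2}\Lambda^{2p+2}+\cdots\bigr)$. But you then assert, without computation, that the bookkeeping "telescopes" into the stated coefficient $C_{p+2}\sum_{k=1}^{n}a_k\omega^{2k}$. It does not. On a uniform periodic grid the Bloch vector $\bfs{U}$ is a common eigenvector of $\bfs{K}$ and $\bfs{M}$ with symbols $\kappa=A_pU_p/h$ and $\mu=B_pU_p\,h$, so the constraint rows of \eqref{eq:mevp} force $\bfs{\Psi}^m=(\kappa/\mu)^m\bfs{U}$ exactly, and the last row gives the exact discrete dispersion relation $\tilde\lambda^h=\sum_{m=0}^{n}a_m(\kappa/\mu)^m$. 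Expanding $\kappa/\mu=\omega^2\bigl(1+C_{p+2}\Lambda^{2p+2}+\mathcal{O}(\Lambda^{2p+4})\bigr)$ via Lemma \ref{lem:abpower} yields a leading correction $C_{p+2}\,\Lambda^{2p+2}\sum_{k=1}^{n}k\,a_k\omega^{2k}$, i.e.\ an extra factor $k$ inside the sum (for the biharmonic case $n=2$, $a_2=1$, this is twice the theorem's constant). So your own (correct) consistency analysis, carried through honestly, does not reproduce the constant you are asked to prove; the claimed telescoping is the missing — and in fact unobtainable — step. Note also an internal mismatch: in your second paragraph the correction attaches only to the $a_n$ term (because you applied the ansatz directly in the last row without eliminating the $b_h(\psi^m_h,\cdot)$ terms), which already disagrees with the theorem whenever some $a_m\neq0$ for $1\le m<n$.

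For comparison, the paper's proof deliberately avoids the propagation you attempt: it treats \eqref{eq:blochdiff} as exact, first uses the constraint rows to replace every $B_p\Psi^m_p\,h$ ($m\ge1$) by $A_p\Psi^{m-1}_p/h$ in the last equation, and only then inserts \eqref{eq:abeh}, so that a single ratio $A_pU_p/B_pU_p$ remains and Lemma \ref{lem:abpower} is invoked exactly once; this is what produces the coefficient $C_{p+2}\sum_{k=1}^{n}a_k\omega^{2k}$. In other words, the theorem's constant is tied to taking the ansatz at face value; your sharper treatment changes the constant (not the rate). If your goal is the statement as written, follow the paper's elimination-then-Lemma route; if you insist on exact consistency with the discrete constraints, you should state and defend the modified constant $\sum_{k}k\,a_k\omega^{2k}$ — either way the superconvergence order $2p+2$, which is what the remark and the numerical experiments actually verify, is unaffected.
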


\begin{proof}
Let $\theta^j_p$ be a test function for each equation in \eqref{eq:vfho}. We represent the approximated eigenfunctions $\psi^m_h, m=0,1, \cdots, n-1,$ in the the same way as \eqref{eq:lc}. Substituting all these terms into \eqref{eq:vfho}, we obtain
\begin{equation}
\begin{aligned}
A_p\Psi^{m-1}_p/h  - B_p\Psi^m_p \ h  & = 0, \ m=1, 2, \cdots, n-1, \\
a_n A_p\Psi^{n-1}_p/h + \sum_{m=0}^{n-1} a_m \ B_p \Psi^m_p \ h & = \tilde \lambda^h B_p\Psi^0_p \ h. \\
\end{aligned}
\end{equation}
After substituting the first equation into the second one, simple manipulations yield
\begin{equation}
\sum_{m=1}^{n} a_m A_p\Psi^{m-1}_p  = (\tilde \lambda^h - a_0 ) B_p\Psi^0_p \ h^2,
\end{equation}
which,  by using \eqref{eq:abeh}, is further simplified as
\begin{equation}
\sum_{m=1}^{n} a_m \omega^{2m-2} A_p U_p  = (\tilde \lambda^h - a_0 ) B_p U_p \ h^2.
\end{equation}

Applying Lemma \ref{lem:abpower}, we have
\begin{equation}
\frac{(\tilde \lambda^h - a_0 ) h^2}{\sum_{m=1}^{n} a_m \omega^{2m-2}} = \frac{ A_p U_p}{B_p U_p } = (\omega h)^2 + C_{p+2} (\omega h)^{2p+4} + \mathcal{O}(\omega h)^{2p+6},
\end{equation}
which is further simplified as
\begin{equation}
\tilde \lambda^h - a_0 = \sum_{m=1}^{n} a_m \omega^{2m} + \Big( C_{p+2} \sum_{m=1}^{n} a_m \omega^{2m} \Big) (\omega h)^{2p+2} + \mathcal{O}(h^{2p+4}).
\end{equation}
Rewriting the equation completes the proof.
\end{proof}

\begin{remark}
In the case where the true eigenvalue can be rewritten in form of $\lambda = \sum_{k=0}^n a_k  \omega^{2k}$ as in \eqref{eq:1dexactsol}, \eqref{eq:2dexactsol}, and \eqref{eq:3dexactsol}, the mixed formulation with optimally-blended rules leads to an improved eigenvalue error of rate $| \lambda_{h} - \lambda | \approx \mathcal{O}(h^{2p+2})$. When standard quadrature rules are applied, the method retains its optimal rates $| \lambda_{h} - \lambda | \approx \mathcal{O}(h^{2p})$. This theorem establishes that the mixed formulation for $2n$-order differential eigenvalue maintains the theoretical findings established for the approximation method of the second-order Laplacian eigenvalue problem. For the generalization to multiple dimensions, we refer to \cite{deng2017dispersion} for the second-order Laplacian eigenvalue problem.
\end{remark}

\section{Numerical experiments} \label{sec:num}
In this section, we present 1D, 2D, and 3D numerical results which cover the spectral approximations of the biharmonic, Cahn-Hilliard, Swift-Hohenberg, and phase-field crystal operators. We limit our studies to simple geometrical domains and uniform meshes to focus our attention on the numerical aspects of the problem. We utilize both mixed isogeometric and finite elements with $p=1,2,3,4$. For our simulations, we consider the unitary domain $\Omega = [0,1]^d$. The exact solutions of \eqref{eq:pde} are given in \eqref{eq:1dexactsol}--\eqref{eq:3dexactsol} with the parameters specified in Section \ref{sec:ps}.

First of all, we present the matrix eigenvalue problems associated with the operators described above, which are easily obtained from the general representation \eqref{eq:mevp}. In particular, with slight abuse of notation, we have
\begin{equation} \label{eq:mevp_bh}
  \begin{bmatrix}
  \bfs{K} & -\bfs{M} \\
  \bfs{0} &  \bfs{K} \\
  \end{bmatrix}
  \begin{bmatrix}
  \bfs{U} \\ 
  \bfs{\Psi^1} \\
  \end{bmatrix}
 = \tilde \lambda^h 
  \begin{bmatrix}
  \bfs{0} & \bfs{0} \\ 
  \bfs{M} & \bfs{0}
  \end{bmatrix}
    \begin{bmatrix}
  \bfs{U} \\ 
  \bfs{\Psi^1} \\
  \end{bmatrix}
\end{equation}
for biharmonic eigenvalue problem, 
\begin{equation} \label{eq:mevp_ch}
  \begin{bmatrix}
  \bfs{K} & -\bfs{M} \\
  \bfs{0} &  \bfs{K} + \bfs{M}\\
  \end{bmatrix}
  \begin{bmatrix}
  \bfs{U} \\ 
  \bfs{\Psi^1} \\
  \end{bmatrix}
 = \tilde \lambda^h 
  \begin{bmatrix}
  \bfs{0} & \bfs{0} \\ 
  \bfs{M} & \bfs{0}
  \end{bmatrix}
    \begin{bmatrix}
  \bfs{U} \\ 
  \bfs{\Psi^1} \\
  \end{bmatrix}
\end{equation}
for Cahn-Hilliard eigenvalue problem, 
\begin{equation} \label{eq:mevp_sh}
  \begin{bmatrix}
  \bfs{K} & -\bfs{M} \\
  \bfs{M} &  \bfs{K} - 2\bfs{M}\\
  \end{bmatrix}
  \begin{bmatrix}
  \bfs{U} \\ 
  \bfs{\Psi^1} \\
  \end{bmatrix}
 = \tilde \lambda^h 
  \begin{bmatrix}
  \bfs{0} & \bfs{0} \\ 
  \bfs{M} & \bfs{0}
  \end{bmatrix}
    \begin{bmatrix}
  \bfs{U} \\ 
  \bfs{\Psi^1} \\
  \end{bmatrix}
\end{equation}
for Swift-Hohenberg eigenvalue problem, 
and
\begin{equation} \label{eq:mevp_pfc}
  \begin{bmatrix}
  \bfs{K} & -\bfs{M} &   \bfs{0}  \\
  \bfs{0} &  \bfs{K} & -\bfs{M}  \\
  \bfs{0} & \bfs{M} &  \bfs{K} - 2\bfs{M}
  \end{bmatrix}
  \begin{bmatrix}
  \bfs{U} \\ 
  \bfs{\Psi^1} \\
   \bfs{\Psi^2} \\
  \end{bmatrix}
 = \tilde \lambda^h 
  \begin{bmatrix}
  \bfs{0} & \bfs{0} & \bfs{0} \\ 
    \bfs{0} & \bfs{0} & \bfs{0} \\ 
  \bfs{M} & \bfs{0} & \bfs{0} \\
  \end{bmatrix}
  \begin{bmatrix}
  \bfs{U} \\ 
  \bfs{\Psi^1} \\
   \bfs{\Psi^2} \\
  \end{bmatrix}
  \end{equation}
for phase-field crystal  eigenvalue problem, respectively. For the fourth-order differential eigenvalue problem, one can symmetrize the matrix on the left-hand side, that is, the matrix eigenvalue problem \eqref{eq:mevp_bh} to \eqref{eq:mevp_sh} are equivalent to
\begin{equation} \label{eq:mevpbh}
  \begin{bmatrix}
  \bfs{K} & -\bfs{M} \\
  -\bfs{M} &  \bfs{K} \\
  \end{bmatrix}
  \begin{bmatrix}
  \bfs{U} \\ 
  \bfs{\Psi^1} \\
  \end{bmatrix}
 = (\tilde \lambda^h -1)
  \begin{bmatrix}
  \bfs{0} & \bfs{0} \\ 
  \bfs{M} & \bfs{0}
  \end{bmatrix}
    \begin{bmatrix}
  \bfs{U} \\ 
  \bfs{\Psi^1} \\
  \end{bmatrix},
\end{equation}
\begin{equation} \label{eq:mevpch}
  \begin{bmatrix}
  \bfs{K} & -\bfs{M} \\
  -\bfs{M} &  \bfs{K} + \bfs{M}\\
  \end{bmatrix}
  \begin{bmatrix}
  \bfs{U} \\ 
  \bfs{\Psi^1} \\
  \end{bmatrix}
 = (\tilde \lambda^h - 1)
  \begin{bmatrix}
  \bfs{0} & \bfs{0} \\ 
  \bfs{M} & \bfs{0}
  \end{bmatrix}
    \begin{bmatrix}
  \bfs{U} \\ 
  \bfs{\Psi^1} \\
  \end{bmatrix},
\end{equation}
and
\begin{equation} \label{eq:mevpsh}
  \begin{bmatrix}
  \bfs{K} & -\bfs{M} \\
  -\bfs{M} &  \bfs{K} - 2\bfs{M}\\
  \end{bmatrix}
  \begin{bmatrix}
  \bfs{U} \\ 
  \bfs{\Psi^1} \\
  \end{bmatrix}
 = (\tilde \lambda^h -2)
  \begin{bmatrix}
  \bfs{0} & \bfs{0} \\ 
  \bfs{M} & \bfs{0}
  \end{bmatrix}
    \begin{bmatrix}
  \bfs{U} \\ 
  \bfs{\Psi^1} \\
  \end{bmatrix},
\end{equation}
respectively. Numerical experiments show that it takes less time to solve \eqref{eq:mevpbh} to \eqref{eq:mevpsh} than solving the original problems \eqref{eq:mevp_bh} to \eqref{eq:mevp_sh} that have non-symmetric matrices on the left-hand side. We use the symmetric systems to solve the differential eigenvalue problems numerically in the following experiments.

As the eigenvalues of \eqref{eq:pde} can be large, we present the relative eigenvalue errors, which for a $j$-th eigenvalue, is defined as
\begin{equation}
e_{\lambda_j} = \frac{| \tilde \lambda^h_j - \lambda_j |}{\lambda_j}.
\end{equation}

We start our accuracy and convergence studies in 1D and then extend them to multiple dimensions. Figure \ref{fig:fig1} shows the eigenvalue errors for isogeometric elements of maximum continuity for the biharmonic, Cahn-Hilliard and Swift-Hohenberg equations with homogeneous Dirichlet boundary conditions. The exact solutions are in form of \eqref{eq:1dexactsol} in 1D. Similar plots have been previously shown in \cite{cottrell2006isogeometric} for approximate eigenvalues of second-order PDEs. As we can see in Figure \ref{fig:fig1}, all three fourth-order operators have quite similar errors when the same boundary conditions are used; the same happens also in multiple dimensions, as we show below.

The case of homogeneous Dirichlet boundary conditions has a particular importance for the biharmonic equation where it corresponds to the case of a simply supported plate. For the differential eigenvalue problems that describe the phase separation process, periodic boundary conditions are more relevant. In the following figures, we consider Dirichlet boundary conditions for the biharmonic equation and periodic boundary conditions for Cahn-Hilliard, Swift-Hohenberg, and phase-field crystal equations.

\begin{figure}[!ht]
\centering\includegraphics[width=13cm]{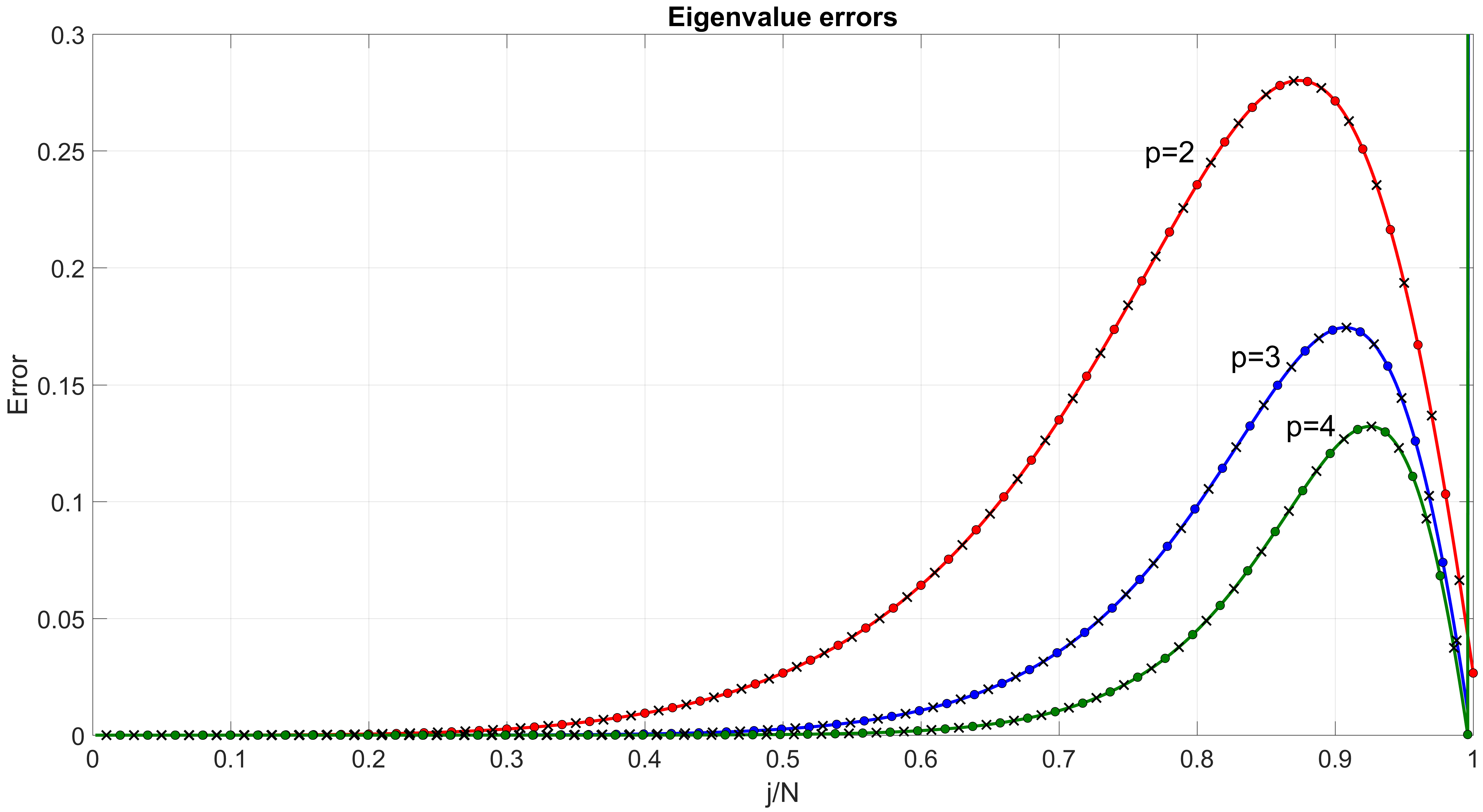}
\caption{Relative approximation errors for quadratic, cubic and quartic isogeometric elements for the biharmonic (solid lines), Cahn-Hilliard (circles) and Swift-Hohenberg (crosses) equations in 1D.}
\label{fig:fig1}
\end{figure}

Figure \ref{fig:fig2} shows the approximation errors for 1D Cahn-Hilliard and phase-field crystal operators with periodic boundary conditions for $p=2,3,4$. Swift-Hohenberg spectra are found to be very similar to the Cahn-Hilliard results and are omitted here for brevity. Once again, we can see a clear improvement in the spectral accuracy of IGA discretizations with an increase in $p$. Due to the use of periodic boundary conditions, outlier modes (large spikes in the errors for $j/N$ close to one (high-frequencies) in the spectra of high-order isogeometric discretizations) are absent in this case. This is related to the fact that outliers are caused by the basis functions with support on the boundaries of the domain \cite{puzyrev2017spectral}, which are absent in the periodic case.

\begin{figure}[!ht]
\centering
 \subfigure{\includegraphics[width=6.5cm]{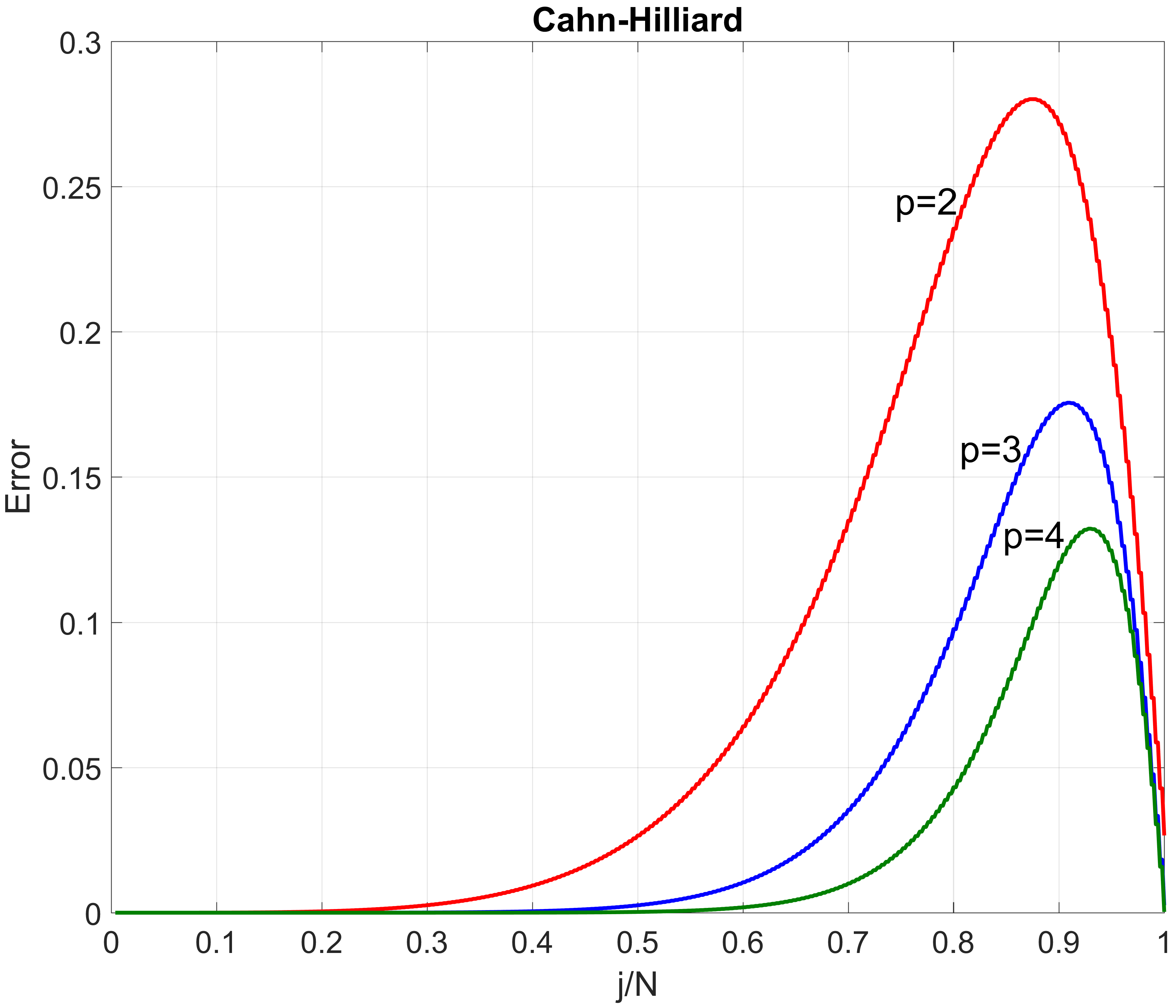}}
 \hspace{0.05cm}
 \subfigure{\includegraphics[width=6.5cm]{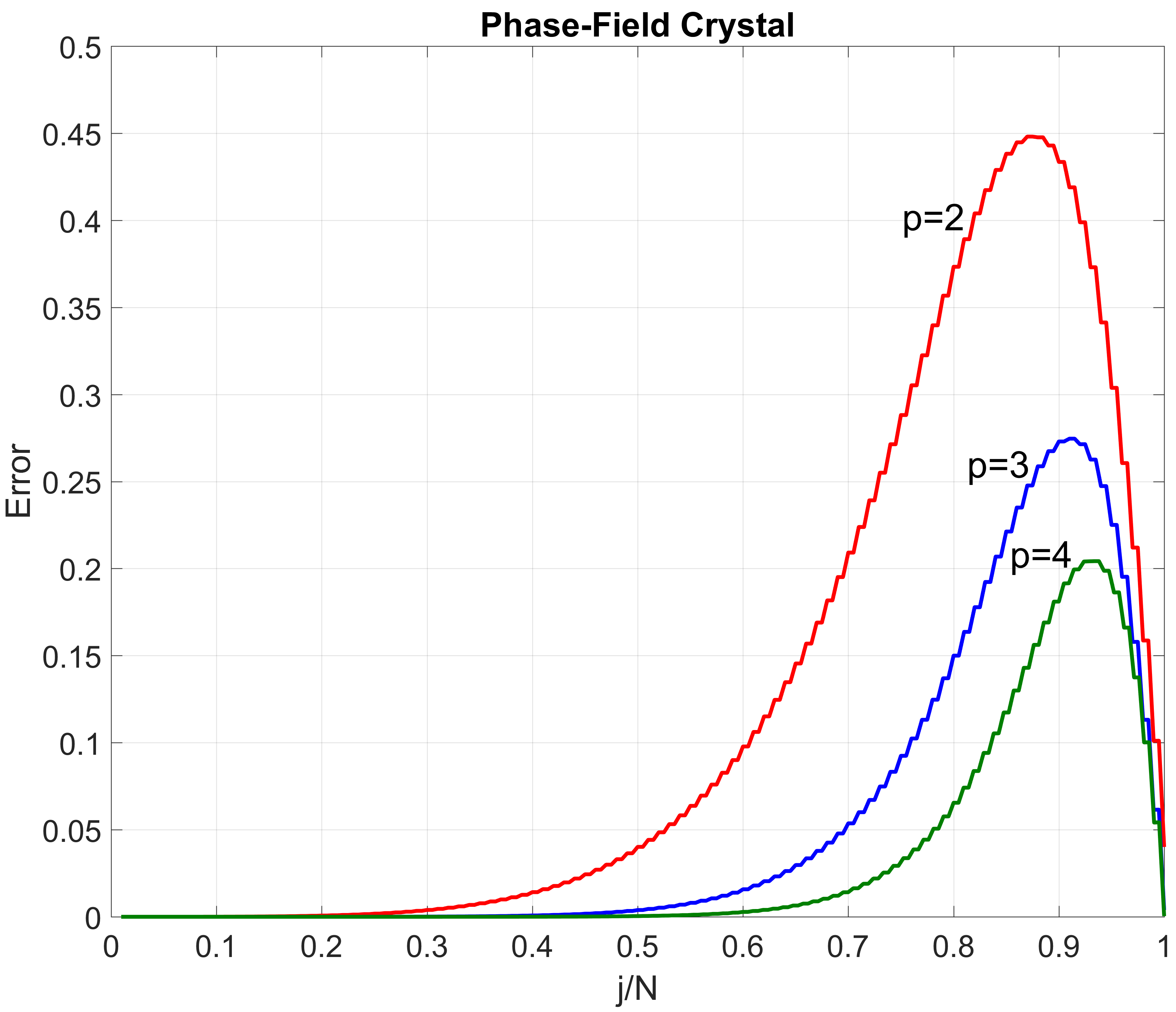}}
\caption{Relative approximation errors for Cahn-Hilliard and phase-field crystal operators with periodic boundary conditions for $p=2,3,4$. Note different Y scales across panels.}
\label{fig:fig2}
\end{figure}

Figure \ref{fig:fig3} compares the eigenvalue and eigenfunction errors between the mixed $C^0$ finite elements and $C^1$ isogeometric elements. We observe branching of the finite element spectrum which is typical for high-order $C^0$ discretizations. Notably, B-spline basis functions of the highest $p-1$ continuity do not exhibit such branching patterns on uniform meshes. Similar to the standard (non-mixed) formulation, the large spikes in the approximation errors in the middle of the spectra at the transition point between the acoustic and optical branches are absent in mixed isogeometric discretizations.

\begin{figure}[!ht]
\centering\includegraphics[width=13cm]{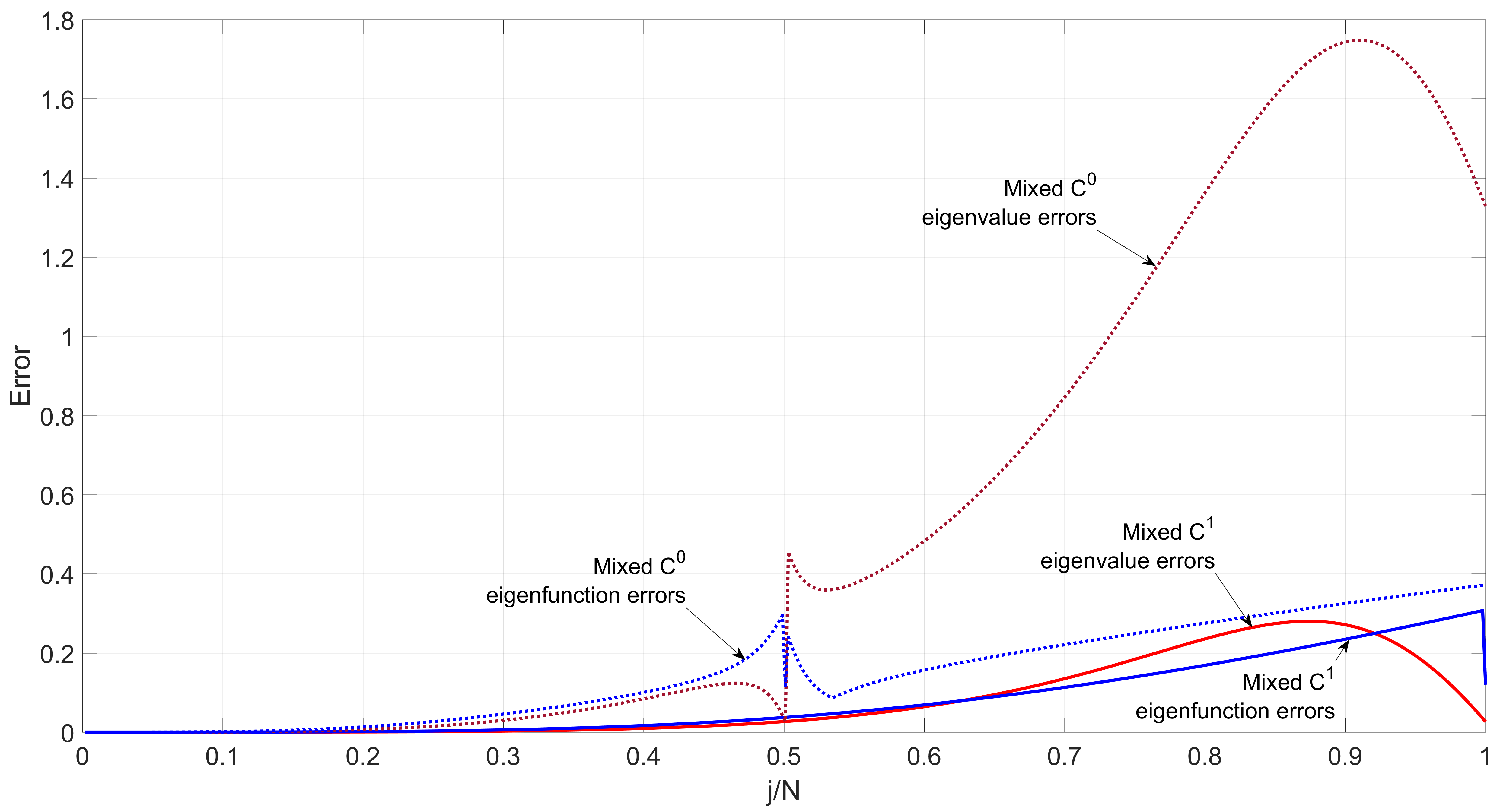}
\caption{Relative eigenvalue and eigenfunction errors for quadratic $C^0$ and $C^1$ elements for the 1D biharmonic equation.}
\label{fig:fig3}
\end{figure}

In Figure \ref{fig:fig5}, we compare the approximation errors for quadratic elements when using mixed isogeometric analysis with standard Gauss quadratures and optimally-blended rules \eqref{eq:tau}. In this case, we also use Dirichlet boundary conditions for the biharmonic equation and periodic boundary conditions for Cahn-Hilliard and phase-field crystal equations. Despite this, all three cases exhibit similar behaviour. The optimally-blended rules lead to the convergence rates of order $2p+2$ as predicted by the theory of subsection \ref{sec:optrules}.

\begin{figure}[!ht]
\centering
 \subfigure{\includegraphics[width=6.5cm]{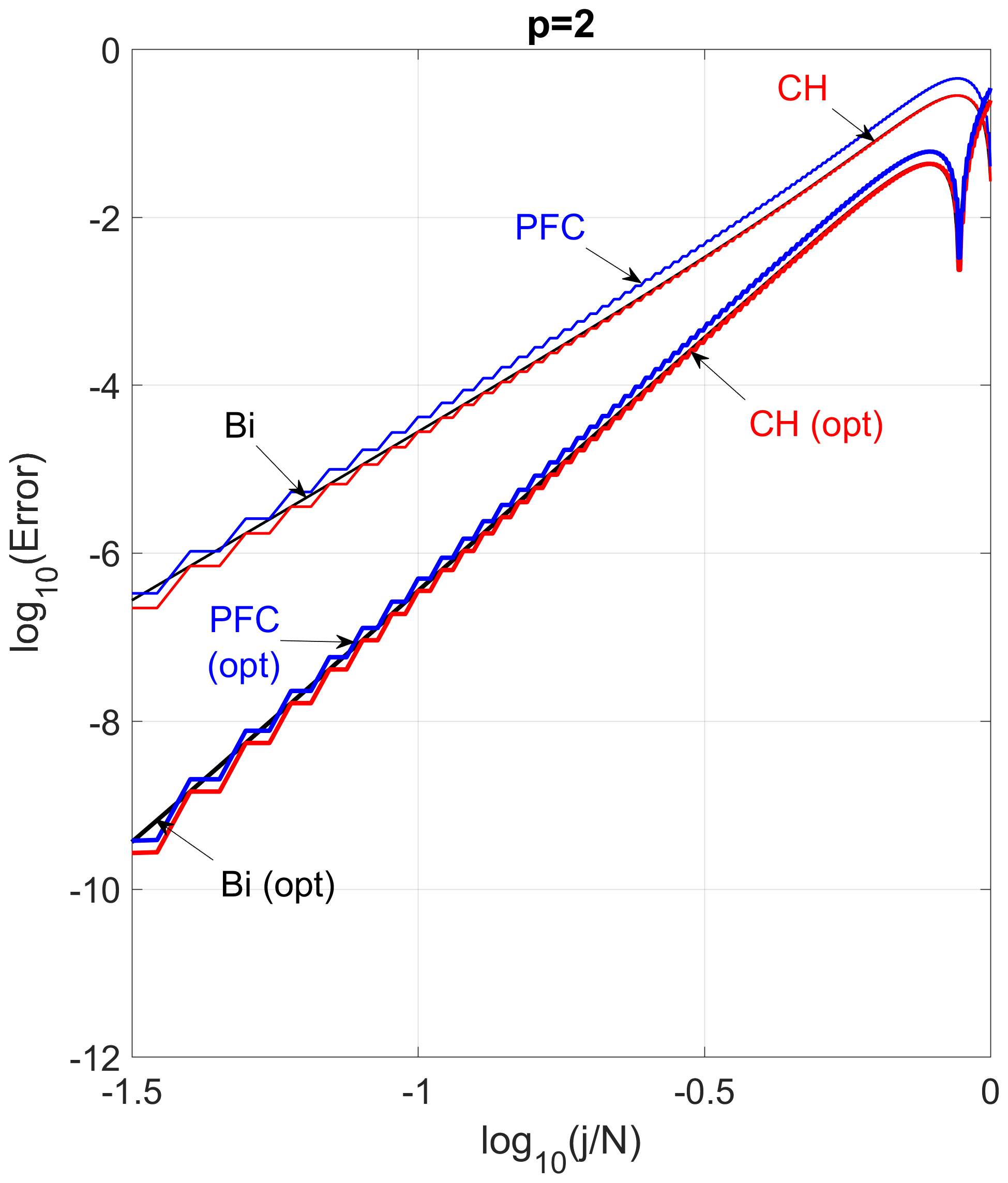}}
 \hspace{0.05cm}
 \subfigure{\includegraphics[width=6.5cm]{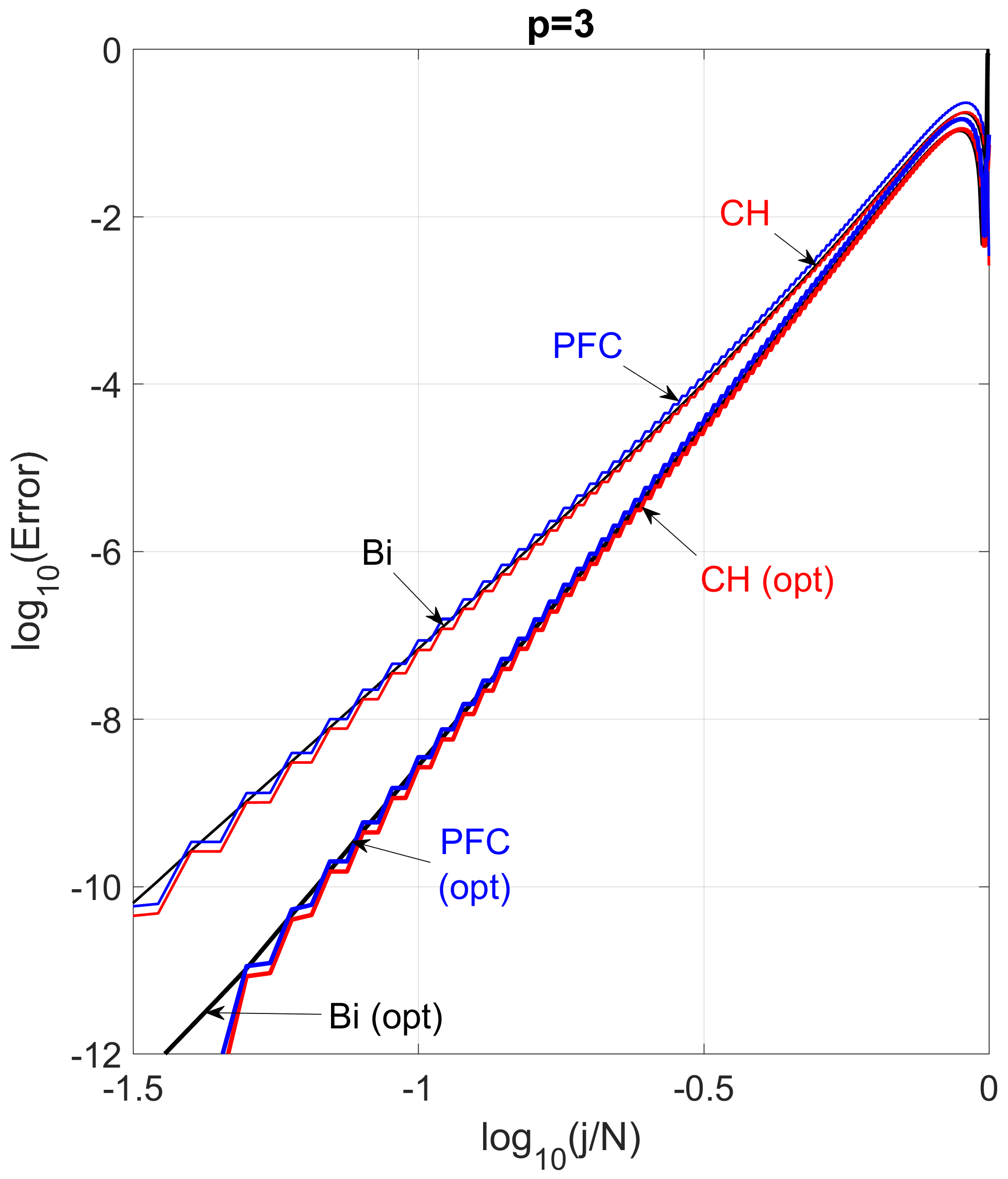}}
\caption{Relative approximation errors for the 1D biharmonic equation with Dirichlet boundary conditions and Cahn-Hilliard and phase-field crystal equations with periodic boundary conditions using standard Gauss and optimal quadrature rules.}
\label{fig:fig5}
\end{figure}

Figure \ref{fig:fig6} compares the relative approximation errors for the 3D biharmonic eigenvalue problem. In the multidimensional plots shown below, the axes correspond to eigenvalue indices $j, l, q$ in \eqref{eq:2dexactsol} and \eqref{eq:3dexactsol}. Again, using mixed isogeometric analysis with optimally-blended rules leads to smaller errors when compared to the fully-integrated case that employs standard Gauss quadratures.

\begin{figure}[!ht]
\centering
\includegraphics[width=13cm]{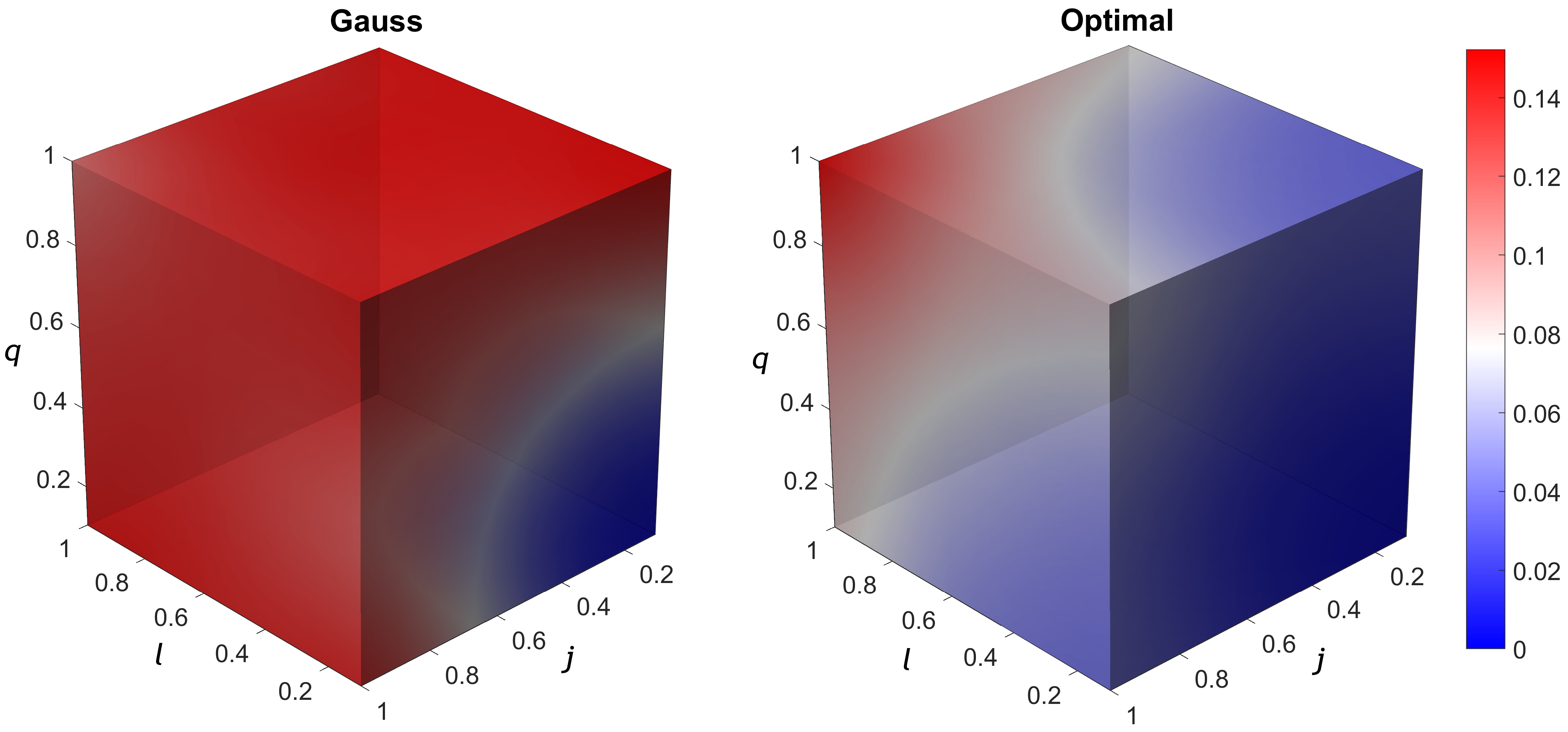}
\caption{Relative approximation errors for quadratic $C^1$ elements for the biharmonic equations in 3D using mixed isogeometric analysis with Gauss quadratures (left) and optimally-blended rules (right).}
\label{fig:fig6}
\end{figure}

Tables \ref{tab:bhev} to \ref{tab:pfcev} show the first, second, fourth, and eighth eigenvalue errors when using mixed isogeometric analysis with standard Gauss quadratures and optimally-blended rules for linear, quadratic, and cubic elements. In these tables, we denote by G when using the standard Gauss quadrature rule while by O when the optimally-blended rule is applied. There are different optimally-blended rules and they lead to the same numerical results; see \cite{puzyrev2017dispersion,calo2017dispersion}. Herein, we use the $G_{p+1}$-$L_{p+1}$ optimally-blended rules. We also denote the convergence rates as $\rho_p$ for $p$-th order elements. For $p$-th order elements, we obtain convergence rates of order $2p+2$ when using the optimally-blended rules. These numerical experiments verify our theoretical findings.

\begin{table}[h!]
\centering 
\begin{tabular}{| c | c | cc | cc | cc | cc |}
\hline
$p$ & $N$ & \multicolumn{2}{c|}{$e_{\lambda_1}$} & \multicolumn{2}{c|}{$e_{\lambda_2}$} & \multicolumn{2}{c|}{$e_{\lambda_4}$} & \multicolumn{2}{c|}{$e_{\lambda_8}$} \\
 & & G &  O & G &  O & G &  O & G &  O  \\[0.1cm] \hline
 & 4&	1.08e-1&	3.24e-3&	4.00e-1&	4.39e-2&	4.78e-1&	5.39e-2&	8.55e-1&	2.00e-1 \\
 &8&	2.60e-2&	1.99e-4&	9.10e-2&	2.63e-3&	1.08e-1&	3.24e-3&	2.08e-1&	1.26e-2 \\
1&16&	6.44e-3&	1.24e-5&	2.21e-2&	1.62e-4&	2.60e-2&	1.99e-4&	4.90e-2&	7.65e-4 \\
 &32&	1.61e-3&	7.74e-7&	5.48e-3&	1.01e-5&	6.44e-3&	1.24e-5&	1.21e-2&	4.74e-5 \\
& $\rho_1$	& 2.02&	4.01&	2.06&	4.03&	2.07&	4.03&	2.05&	4.02 \\[0.1cm] \hline

 & 4&	1.20e-3&	8.68e-5&	2.15e-2&	4.45e-3&	2.66e-2&	5.54e-3&	1.34e-1&	3.00e-2 \\
 &8&	6.83e-5&	1.34e-6&	9.74e-4&	6.97e-5&	1.20e-3&	8.68e-5&	5.23e-3&	7.18e-4 \\
2&16&	4.16e-6&	2.09e-8&	5.54e-5&	1.08e-6&	6.83e-5&	1.34e-6&	2.70e-4&	1.10e-5 \\
 &32&	2.59e-7&	3.26e-10&	3.38e-6&	1.68e-8&	4.16e-6&	2.09e-8&	1.60e-5&	1.71e-7 \\
 & $\rho_2$	& 4.06&	6.01&	4.20&	6.01&	4.21&	6.01&	4.34&	5.83 \\[0.1cm] \hline
 
 & 4&	1.94e-5&	3.44e-6&	1.59e-3&	7.19e-4&	1.98e-3&	8.98e-4&	1.03e-2&	2.73e-3 \\
 &8&	2.60e-7&	1.47e-8&	1.61e-5&	3.13e-6&	2.01e-5&	3.91e-6&	2.14e-4&	7.46e-5 \\
3&16&	3.86e-9&	5.89e-11&		2.09e-7&	1.22e-8&	2.61e-7&	1.53e-8&	2.32e-6&	2.82e-7 \\
 &32&	5.95e-11&		2.23e-13&	3.10e-9&	4.81e-11&		3.86e-9&	6.00e-11&		3.24e-8&	1.09e-9 \\
 & $\rho_3$	& 6.10&	7.96&	6.32&	7.95&	6.32&	7.95&	6.13&	7.18 \\ \hline
 
\end{tabular}
\caption{Relative eigenvalue errors of mixed isogeometric analysis using standard Gauss (G) quadratures and optimally-blended (O) rules for biharmonic eigenvalue problem.}
\label{tab:bhev} 

\vspace{0.2cm}
\begin{tabular}{| c | c | cc | cc | cc | cc |}
\hline
$p$ & $N$ & \multicolumn{2}{c|}{$e_{\lambda_1}$} & \multicolumn{2}{c|}{$e_{\lambda_2}$} & \multicolumn{2}{c|}{$e_{\lambda_4}$} & \multicolumn{2}{c|}{$e_{\lambda_8}$} \\
 & & G &  O & G &  O & G &  O & G &  O  \\[0.1cm] \hline

  &4	&1.05e-1&	3.16e-3&	3.96e-1&	4.34e-2&	4.75e-1&	5.36e-2&	8.51e-1&	1.99e-1 \\
  &8	&2.54e-2&	1.95e-4&	9.00e-2&	2.61e-3&	1.07e-1&	3.22e-3&	2.07e-1&	1.26e-2 \\
1&16	&6.29e-3&	1.21e-5&	2.19e-2&	1.60e-4&	2.58e-2&	1.98e-4&	4.88e-2&	7.62e-4 \\
  &32	&1.57e-3&	7.56e-7&	5.42e-3&	9.98e-6&	6.40e-3&	1.23e-5&	1.20e-2&	4.72e-5 \\
  & $\rho_1$	& 2.02&	4.01&	2.06&	4.03&	2.07&	4.03&	2.05&	4.02 \\[0.1cm] \hline

  &4	&1.17e-3&	8.47e-5&	2.13e-2&	4.40e-3&	2.64e-2&	5.51e-3&	1.34e-1&	2.99e-2 \\
  &8	&6.66e-5&	1.31e-6&	9.64e-4&	6.90e-5&	1.19e-3&	8.62e-5&	5.21e-3&	7.15e-4 \\
2&16	&4.06e-6&	2.04e-8&	5.49e-5&	1.07e-6&	6.78e-5&	1.33e-6&	2.69e-4&	1.10e-5 \\
  &32	&2.52e-7&	3.18e-10&	3.35e-6&	1.66e-8&	4.14e-6&	2.07e-8&	1.60e-5&	1.71e-7 \\
  & $\rho_2$	& 4.06&	6.01&	4.20&	6.01&	4.21&	6.01&	4.34&	5.83 \\[0.1cm] \hline

  &4	&1.90e-5&	3.35e-6&	1.57e-3&	7.12e-4&	1.97e-3&	8.92e-4&	1.02e-2&	2.72e-3 \\
  &8	&2.54e-7&	1.43e-8&	1.60e-5&	3.10e-6&	2.00e-5&	3.89e-6&	2.13e-4&	7.43e-5 \\
3&16	&3.77e-9&	5.75e-11&		2.07e-7&	1.21e-8&	2.59e-7&	1.52e-8&	2.31e-6&	2.80e-7 \\
  &32	&5.81e-11&		2.02e-13&	3.07e-9&	4.76e-11&		3.84e-9&	5.96e-11&		3.22e-8&	1.09e-9 \\
  & $\rho_3$	& 6.10&	7.99&	6.32&	7.95&	6.32&	7.95&	6.13&	7.18 \\ \hline

\end{tabular}
\caption{Relative eigenvalue errors of mixed isogeometric analysis using standard Gauss (G) quadratures and optimally-blended (O) rules for Cahn-Hilliard eigenvalue problem.}
\label{tab:chev} 
\end{table}

\begin{table}[h!]
\centering 
\begin{tabular}{| c | c | cc | cc | cc | cc |}
\hline
$p$ & $N$ & \multicolumn{2}{c|}{$e_{\lambda_1}$} & \multicolumn{2}{c|}{$e_{\lambda_2}$} & \multicolumn{2}{c|}{$e_{\lambda_4}$} & \multicolumn{2}{c|}{$e_{\lambda_8}$} \\
 & & G &  O & G &  O & G &  O & G &  O  \\[0.1cm] \hline

  &4	&1.13e-1&	3.42e-3&	4.09e-1&	4.48e-2&	4.85e-1&	5.46e-2&	8.63e-1&	2.02e-1 \\
  &8	&2.74e-2&	2.10e-4&	9.29e-2&	2.69e-3&	1.09e-1&	3.28e-3&	2.10e-1&	1.27e-2 \\
1&16	&6.79e-3&	1.31e-5&	2.25e-2&	1.65e-4&	2.63e-2&	2.02e-4&	4.94e-2&	7.71e-4 \\
  &32	&1.69e-3&	8.16e-7&	5.59e-3&	1.03e-5&	6.53e-3&	1.26e-5&	1.21e-2&	4.77e-5 \\
  & $\rho_1$	& 2.02&	4.01&	2.06&	4.03&	2.07&	4.03&	2.05&	4.02 \\[0.1cm] \hline

  &4	&1.26e-3&	9.14e-5&	2.19e-2&	4.54e-3&	2.69e-2&	5.61e-3&	1.35e-1&	3.02e-2 \\
  &8	&7.19e-5&	1.41e-6&	9.94e-4&	7.11e-5&		1.22e-3&	8.79e-5&	5.27e-3&	7.24e-4 \\
2&16	&4.39e-6&	2.20e-8&	5.66e-5&	1.10e-6&	6.91e-5&	1.36e-6&	2.72e-4&	1.11e-5 \\
  &32	&2.72e-7&	3.43e-10&	3.45e-6&	1.71e-8&	4.22e-6&	2.11e-8&		1.61e-5&	1.73e-7 \\
  & $\rho_2$	& 4.06&	6.01&	4.20&	6.01&	4.21&	6.01&	4.34&	5.83 \\[0.1cm] \hline

  &4	&2.05e-5&	3.62e-6&	1.62e-3&	7.34e-4&	2.01e-3&	9.09e-4&	1.03e-2&	2.76e-3 \\
  &8	&2.74e-7&	1.54e-8&	1.65e-5&	3.20e-6&	2.04e-5&	3.96e-6&	2.15e-4&	7.52e-5 \\
3&16	&4.07e-9&	6.21e-11&		2.14e-7&	1.25e-8&	2.64e-7&	1.55e-8&	2.34e-6&	2.84e-7 \\
  &32	&6.26e-11&		1.76e-13&	3.17e-9&	4.90e-11&		3.91e-9&	6.08e-11&		3.26e-8&	1.10e-9 \\
  & $\rho_3$	& 6.10&	8.08&	6.32&	7.95&	6.32&	7.95&	6.13&	7.18 \\[0.1cm] \hline

\end{tabular}
\caption{Relative eigenvalue errors of mixed isogeometric analysis using standard Gauss (G) quadratures and optimally-blended (O) rules for Swift-Hohenberg eigenvalue problem.}
\label{tab:shev}

\vspace{0.2cm}
\begin{tabular}{| c | c | cc | cc | cc | cc |}
\hline
$p$ & $N$ & \multicolumn{2}{c|}{$e_{\lambda_1}$} & \multicolumn{2}{c|}{$e_{\lambda_2}$} & \multicolumn{2}{c|}{$e_{\lambda_4}$} & \multicolumn{2}{c|}{$e_{\lambda_8}$} \\
 & & G &  O & G &  O & G &  O & G &  O  \\[0.1cm] \hline
 & 4&	1.08e-1&	3.24e-3&	4.00e-1&	4.39e-2&	4.78e-1&	5.39e-2&	8.55e-1&	2.00e-1 \\
 
  &4	&1.72e-1&	5.03e-3&	6.67e-1&	6.59e-2&	8.06e-1&	8.04e-2&	1.54e0&	2.86e-1 \\
  &8	&4.07e-2&	3.10e-4&	1.42e-1&	4.00e-3&	1.67e-1&	4.90e-3&	3.30e-1&	1.90e-2 \\
1&16	&1.00e-2&	1.93e-5&	3.38e-2&	2.46e-4&	3.96e-2&	3.02e-4&	7.48e-2&	1.15e-3 \\
  &32	&2.50e-3&	1.20e-6&	8.34e-3&	1.53e-5&	9.76e-3&	1.88e-5&	1.82e-2&	7.14e-5 \\
  & $\rho_1$	& 2.03&	4.01&	2.10&	4.02&	2.12&	4.02&	2.13&	3.99 \\[0.1cm] \hline
  
  &4	&1.86e-3&	1.35e-4&	3.29e-2&	6.77e-3&	4.05e-2&	8.39e-3&	2.09e-1&	4.55e-2 \\
  &8	&1.06e-4&	2.08e-6&	1.48e-3&	1.06e-4&	1.82e-3&	1.31e-4&	7.90e-3&	1.08e-3 \\
2&16	&6.47e-6&	3.24e-8&	8.43e-5&	1.64e-6&	1.03e-4&	2.03e-6&	4.07e-4&	1.66e-5 \\
  &32	&4.02e-7&	5.06e-10&	5.14e-6&	2.55e-8&	6.30e-6&	3.16e-8&	2.42e-5&	2.58e-7 \\
  & $\rho_2$	& 4.06&	6.01&	4.21&	6.01&	4.21&	6.01&	4.35&	5.83 \\[0.1cm] \hline

  &4	&3.02e-5&	5.34e-6&	2.42e-3&	1.09e-3&	3.00e-3&	1.36e-3&	1.55e-2&	4.12e-3 \\
  &8	&4.04e-7&	2.28e-8&	2.45e-5&	4.76e-6&	3.04e-5&	5.92e-6&	3.22e-4&	1.12e-4 \\
3&16	&6.00e-9&	9.12e-11&		3.18e-7&	1.86e-8&	3.94e-7&	2.31e-8&	3.50e-6&	4.24e-7 \\
  &32	&9.09e-11&		9.10e-13&	4.71e-9&	7.23e-11&		5.84e-9&	9.09e-11&		4.88e-8&	1.66e-9 \\
  & $\rho_3$	& 6.11&	7.54&	6.32&	7.95&	6.32&	7.95&	6.14&	7.17 \\[0.1cm] \hline
    
\end{tabular}
\caption{Relative eigenvalue errors of mixed isogeometric analysis using standard Gauss (G) quadratures and optimally-blended (O) rules for phase-field crystal eigenvalue problem.}
\label{tab:pfcev} 
\end{table}

Figure \ref{fig:fig7} shows the convergence of the eigenvalues $\lambda_j, j=1,2,4,8$ of the 2D biharmonic problem. The squared eigenvalue errors of the quadratic mixed isogeometric analysis with standard Gauss quadratures have convergence order close to 4 (that is $2p$). Using optimally-blended rules leads to smaller eigenvalue errors and convergence order of 6 (that is $2p+2$). Figure \ref{fig:fig8} shows the convergence of 2D Cahn-Hilliard operator eigenvalues when periodic boundary conditions are employed. Similar results are obtained for the phase-field equations.

\begin{figure}[!ht]
\centering\includegraphics[width=13cm]{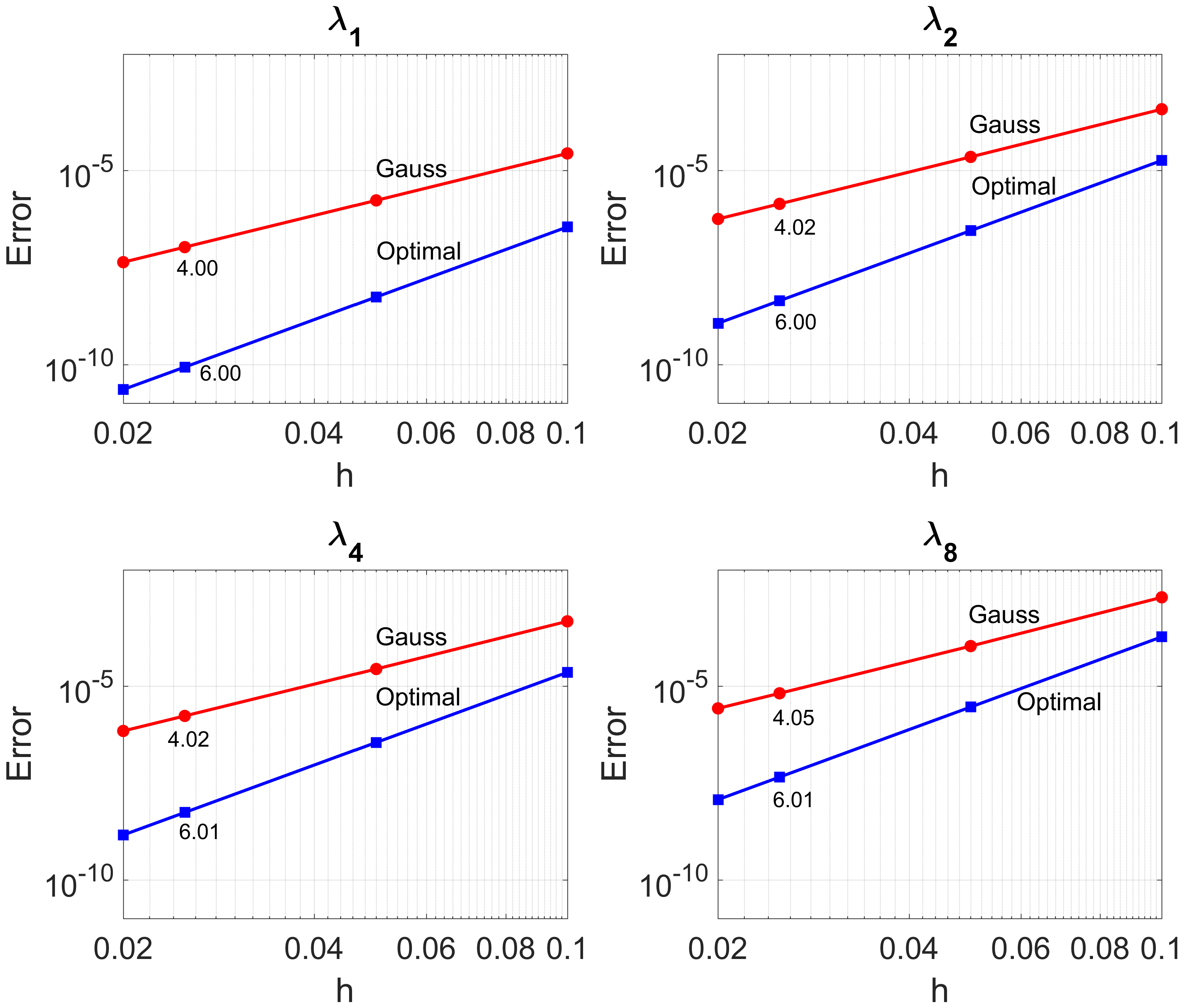}
\caption{The convergence of the eigenvalues $\lambda_j, j=1,2,4,8$ of the 2D biharmonic problem (simply supported plate).}
\label{fig:fig7}
\end{figure}

\begin{figure}[!ht]
\centering\includegraphics[width=13cm]{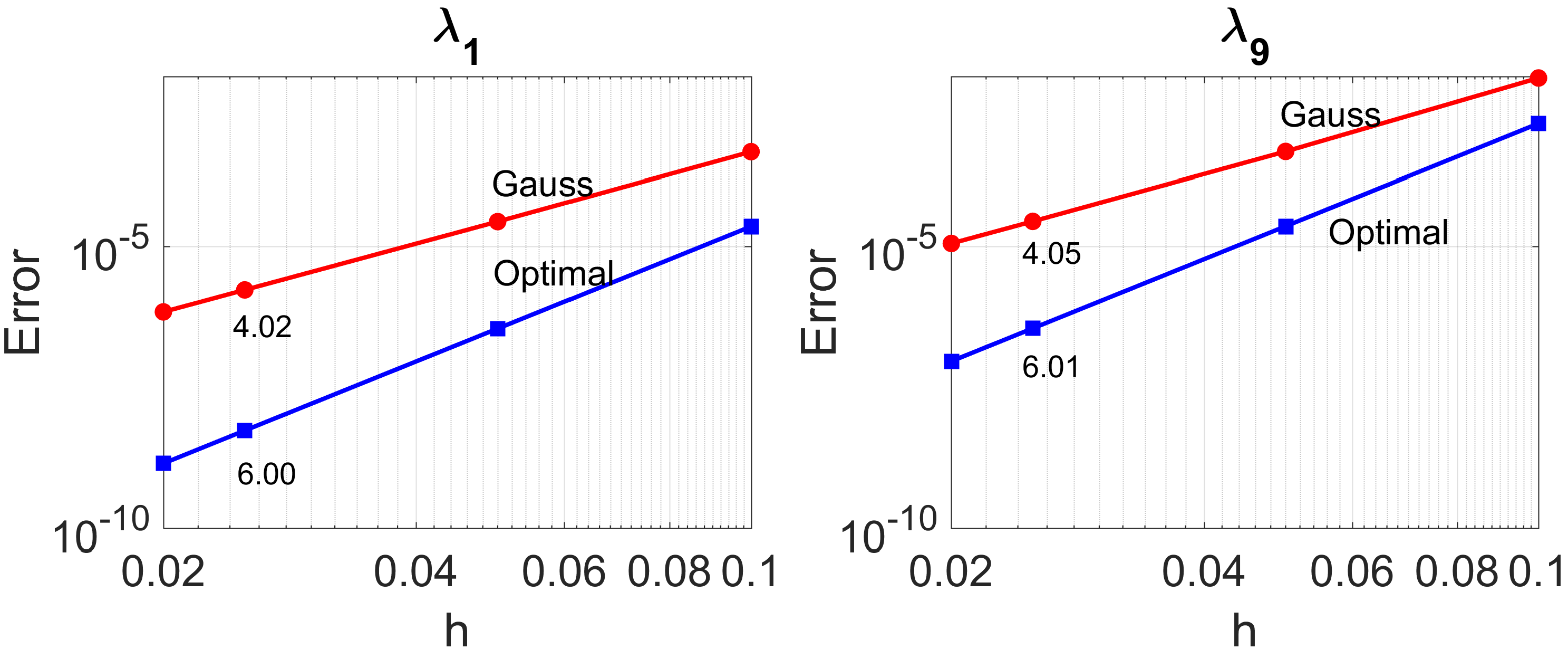}
\caption{The convergence of the eigenvalues $\lambda_1$ and $\lambda_{9}$ of the 2D Cahn-Hilliard equation with periodic boundary conditions.}
\label{fig:fig8}
\end{figure}

\section{Conclusions and future outlook} \label{sec:con}
We present and study a mixed formulation of isogeometric analysis for a set of $2n$-order differential eigenvalue problems, which includes operators arising from the biharmonic problem, Cahn-Hilliard, Swift-Hohenberg, and phase-field crystal equations. We show that the mixed formulation with standard quadrature rules applied to integrate the inner-products leads to optimal rates ($2p$) of convergence on eigenvalue errors while the mixed formulation with optimally-blended rules leads to super-convergent $2p+2$ approximated eigenvalues. This work generalizes the theoretical results obtained for Laplacian eigenvalue problem to higher-order differential eigenvalue problems.

Future work in this direction includes generalization to other higher-order differential eigenvalue problem as well as to nonlinear differential eigenvalue problems. We expect the isogeometric discretization of nonlinear differential eigenvalue problems leads to nonlinear matrix eigenvalue problem, which requires advanced numerical solvers. Developing both optimally-blended quadrature rules for nonlinear differential eigenvalue problems and fast and efficient numerical solvers for their corresponding nonlinear matrix eigenvalue problems are subject to future work.

\section*{Acknowledgement}
This publication was made possible in part by the CSIRO Professorial Chair in Computational Geoscience at Curtin University and the Deep Earth Imaging Enterprise Future Science Platforms of the Commonwealth Scientific Industrial Research Organisation, CSIRO, of Australia. Additional support was provided by the European Union's Horizon 2020 Research and Innovation Program of the Marie Sk{\l}odowska-Curie grant agreement No. 777778, the Mega-grant of the Russian Federation Government (N 14.Y26.31.0013), the Institute for Geoscience Research (TIGeR), and the Curtin Institute for Computation. The J. Tinsley Oden Faculty Fellowship Research Program at the Institute for Computational Engineering and Sciences (ICES) of the University of Texas at Austin has partially supported the visits of VMC to ICES.





\begin{thebibliography}{62}
\expandafter\ifx\csname natexlab\endcsname\relax\def\natexlab#1{#1}\fi
\expandafter\ifx\csname url\endcsname\relax
  \def\url#1{\texttt{#1}}\fi
\expandafter\ifx\csname urlprefix\endcsname\relax\def\urlprefix{URL }\fi

\bibitem[{Adams(1975)}]{adams1975sobolev}
Adams, R.~A., 1975. Sobolev spaces. Academic Press, New York.

\bibitem[{Ainsworth and Wajid(2010)}]{ainsworth2010optimally}
Ainsworth, M., Wajid, H.~A., 2010. Optimally blended spectral-finite element
  scheme for wave propagation and nonstandard reduced integration. SIAM Journal
  on Numerical Analysis 48~(1), 346--371.

\bibitem[{Allen and Cahn(1979)}]{allen1979microscopic}
Allen, S.~M., Cahn, J.~W., 1979. A microscopic theory for antiphase boundary
  motion and its application to antiphase domain coarsening. Acta Metallurgica
  27~(6), 1085--1095.

\bibitem[{Andreev et~al.(2005)Andreev, Lazarov, and
  Racheva}]{andreev2005postprocessing}
Andreev, A., Lazarov, R., Racheva, M., 2005. Postprocessing and higher order
  convergence of the mixed finite element approximations of biharmonic
  eigenvalue problems. Journal of computational and applied mathematics
  182~(2), 333--349.

\bibitem[{Arnold and Brezzi(1985)}]{arnold1985mixed}
Arnold, D.~N., Brezzi, F., 1985. Mixed and nonconforming finite element
  methods: implementation, postprocessing and error estimates. ESAIM:
  Mathematical Modelling and Numerical Analysis 19~(1), 7--32.

\bibitem[{Auricchio et~al.(2004)Auricchio, Brezzi, and
  Lovadina}]{auricchio2004mixed}
Auricchio, F., Brezzi, F., Lovadina, C., 2004. Mixed finite element methods.
  Encyclopedia of computational mechanics.

\bibitem[{Babu{\v{s}}ka(1973)}]{babuvska1973finite}
Babu{\v{s}}ka, I., 1973. The finite element method with {L}agrangian
  multipliers. Numerische Mathematik 20~(3), 179--192.

\bibitem[{Babu{\v{s}}ka and Osborn(1991)}]{babuvska1991eigenvalue}
Babu{\v{s}}ka, I., Osborn, J., 1991. Eigenvalue problems. In: Handbook of
  Numerical Analysis, {V}ol.\ {II}. Handb. Numer. Anal., II. North-Holland,
  Amsterdam, pp. 641--787.

\bibitem[{Barto{\v{n}} and Calo(2016{\natexlab{a}})}]{bartovn2016gaussian}
Barto{\v{n}}, M., Calo, V.~M., 2016{\natexlab{a}}. Gaussian quadrature for
  splines via homotopy continuation: rules for {C}2 cubic splines. Journal of
  Computational and Applied Mathematics 296, 709--723.

\bibitem[{Barto{\v{n}} and Calo(2016{\natexlab{b}})}]{bartovn2016optimal}
Barto{\v{n}}, M., Calo, V.~M., 2016{\natexlab{b}}. Optimal quadrature rules for
  odd-degree spline spaces and their application to tensor-product-based
  isogeometric analysis. Computer Methods in Applied Mechanics and Engineering
  305, 217--240.

\bibitem[{Barto{\v{n}} and Calo(2017)}]{bartovn2017gauss}
Barto{\v{n}}, M., Calo, V.~M., 2017. Gauss--{G}alerkin quadrature rules for
  quadratic and cubic spline spaces and their application to isogeometric
  analysis. Computer-Aided Design 82, 57--67.

\bibitem[{Barto{\v{n}} et~al.(2018)Barto{\v{n}}, Calo, Deng, and
  Puzyrev}]{bartovn2017generalization}
Barto{\v{n}}, M., Calo, V.~M., Deng, Q., Puzyrev, V., 2018. Generalization of
  the {P}ythagorean Eigenvalue Error Theorem and its Application to
  Isogeometric Analysis. SEMA-SIMAI Springer Series.

\bibitem[{Bekele et~al.(2017)Bekele, Fonn, Kvamsdal, Kvarving, and
  Nordal}]{bekele2017mixed}
Bekele, Y.~W., Fonn, E., Kvamsdal, T., Kvarving, A.~M., Nordal, S., 2017. On
  mixed isogeometric analysis of poroelasticity. arXiv preprint
  arXiv:1706.01275.

\bibitem[{Bleher et~al.(1994)Bleher, Lebowitz, and Speer}]{bleher1994existence}
Bleher, P.~M., Lebowitz, J.~L., Speer, E.~R., 1994. Existence and positivity of
  solutions of a fourth-order nonlinear {PDE} describing interface
  fluctuations. Communications on Pure and Applied Mathematics 47~(7),
  923--942.

\bibitem[{Brezzi(1974)}]{brezzi1974existence}
Brezzi, F., 1974. On the existence, uniqueness and approximation of
  saddle-point problems arising from {L}agrangian multipliers. Revue
  fran{\c{c}}aise d'automatique, informatique, recherche op{\'e}rationnelle.
  Analyse num{\'e}rique 8~(R2), 129--151.

\bibitem[{Brezzi and Fortin(1991)}]{brezzi1991mixed}
Brezzi, F., Fortin, M., 1991. Mixed and hybrid finite element methods. Springer
  Science \& Business Media.

\bibitem[{Buffa et~al.(2011)Buffa, De~Falco, and
  Sangalli}]{buffa2011isogeometric}
Buffa, A., De~Falco, C., Sangalli, G., 2011. Isogeometric analysis: stable
  elements for the {2D} {S}tokes equation. International Journal for Numerical
  Methods in Fluids 65~(11-12), 1407--1422.

\bibitem[{Cahn and Hilliard(1958)}]{cahn1958free}
Cahn, J.~W., Hilliard, J.~E., 1958. Free energy of a nonuniform system. {I}.
  {I}nterfacial free energy. The Journal of chemical physics 28~(2), 258--267.

\bibitem[{Calo et~al.(2017{\natexlab{a}})Calo, Cicuttin, Deng, and
  Ern}]{calo2017spectral}
Calo, V.~M., Cicuttin, M., Deng, Q., Ern, A., 2017{\natexlab{a}}. Spectral
  approximation of elliptic operators by the hybrid high-order method. arXiv
  preprint arXiv:1711.01135.

\bibitem[{Calo et~al.(2017{\natexlab{b}})Calo, Deng, and
  Puzyrev}]{calo2017dispersion}
Calo, V.~M., Deng, Q., Puzyrev, V., 2017{\natexlab{b}}. Dispersion optimized
  quadratures for isogeometric analysis. arXiv preprint arXiv:1702.04540.

\bibitem[{Cervera et~al.(2010)Cervera, Chiumenti, and
  Codina}]{cervera2010mixed}
Cervera, M., Chiumenti, M., Codina, R., 2010. Mixed stabilized finite element
  methods in nonlinear solid mechanics: {P}art {II}: {S}train localization.
  Computer Methods in Applied Mechanics and Engineering 199~(37), 2571--2589.

\bibitem[{Chung et~al.(2015)Chung, Efendiev, and Lee}]{chung2015mixed}
Chung, E.~T., Efendiev, Y., Lee, C.~S., 2015. Mixed generalized multiscale
  finite element methods and applications. Multiscale Modeling \& Simulation
  13~(1), 338--366.

\bibitem[{Ciarlet(2002)}]{ciarlet2002finite}
Ciarlet, P.~G., 2002. Finite Element Method for Elliptic Problems. Society for
  Industrial and Applied Mathematics, Philadelphia, PA, USA.

\bibitem[{Cottrell et~al.(2006)Cottrell, Reali, Bazilevs, and
  Hughes}]{cottrell2006isogeometric}
Cottrell, J.~A., Reali, A., Bazilevs, Y., Hughes, T. J.~R., 2006. Isogeometric
  analysis of structural vibrations. Computer methods in applied mechanics and
  engineering 195~(41), 5257--5296.

\bibitem[{Crouzeix and Raviart(1973)}]{crouzeix1973conforming}
Crouzeix, M., Raviart, P.-A., 1973. Conforming and nonconforming finite element
  methods for solving the stationary {S}tokes equations. Revue fran{\c{c}}aise
  d'automatique informatique recherche op{\'e}rationnelle. Math{\'e}matique
  7~(R3), 33--75.

\bibitem[{Davies(1997)}]{davies1997lp}
Davies, E.~B., 1997. {L$^p$} spectral theory of higher-order elliptic
  differential operators. Bulletin of the London Mathematical Society 29~(5),
  513--546.

\bibitem[{De~Boor(1978)}]{de1978practical}
De~Boor, C., 1978. A practical guide to splines. Vol.~27. Springer-Verlag New
  York.

\bibitem[{Deng et~al.(2018)Deng, Barto{\v{n}}, Puzyrev, and
  Calo}]{deng2018dispersion}
Deng, Q., Barto{\v{n}}, M., Puzyrev, V., Calo, V.~M., 2018.
  Dispersion-minimizing quadrature rules for {C}1 quadratic isogeometric
  analysis. Computer Methods in Applied Mechanics and Engineering 328,
  554--564.

\bibitem[{Deng and Calo(2017)}]{deng2017dispersion}
Deng, Q., Calo, V.~M., 2017. Dispersion-minimized mass for isogeometric
  analysis. arXiv preprint arXiv:1711.02979.

\bibitem[{Dohrmann and Bochev(2004)}]{dohrmann2004stabilized}
Dohrmann, C.~R., Bochev, P.~B., 2004. A stabilized finite element method for
  the {S}tokes problem based on polynomial pressure projections. International
  Journal for Numerical Methods in Fluids 46~(2), 183--201.

\bibitem[{Donea et~al.(1982)Donea, Giuliani, and Halleux}]{donea1982arbitrary}
Donea, J., Giuliani, S., Halleux, J.-P., 1982. An arbitrary
  {L}agrangian-{E}ulerian finite element method for transient dynamic
  fluid-structure interactions. Computer methods in applied mechanics and
  engineering 33~(1-3), 689--723.

\bibitem[{Dortdivanlioglu et~al.(2018)Dortdivanlioglu, Krischok, Beir{\~a}o~da
  Veiga, and Linder}]{dortdivanlioglu2018mixed}
Dortdivanlioglu, B., Krischok, A., Beir{\~a}o~da Veiga, L., Linder, C., 2018.
  Mixed isogeometric analysis of strongly coupled diffusion in porous
  materials. International Journal for Numerical Methods in Engineering
  114~(1), 28--46.

\bibitem[{Douglas and Wang(1989)}]{douglas1989absolutely}
Douglas, J., Wang, J.~P., 1989. An absolutely stabilized finite element method
  for the {S}tokes problem. Mathematics of computation 52~(186), 495--508.

\bibitem[{Dudu et~al.(2012)Dudu, Lavier, Hughes, and Calo}]{duddu2012finite}
Dudu, R., Lavier, L.~L., Hughes, T. J.~R., Calo, V.~M., 2012. A finite
  {E}ulerian formulation for compressible and nearly incompressible
  hyperelasticity using high-order {B}-spline finite elements. International
  Journal for Numerical Methods in Engineering 89~(6), 762--785.

\bibitem[{Elder et~al.(2002)Elder, Katakowski, Haataja, and
  Grant}]{elder2002modeling}
Elder, K.~R., Katakowski, M., Haataja, M., Grant, M., 2002. Modeling elasticity
  in crystal growth. Physical review letters 88~(24), 245701.

\bibitem[{Elguedj et~al.(2008)Elguedj, Bazilevs, Calo, and
  Hughes}]{elguedj2008and}
Elguedj, T., Bazilevs, Y., Calo, V.~M., Hughes, T. J.~R., 2008. B and {F} and
  projection methods for nearly incompressible linear and non-linear elasticity
  and plasticity using higher-order nurbs elements. Computer methods in applied
  mechanics and engineering 197~(33), 2732--2762.

\bibitem[{Evans and Hughes(2013)}]{evans2013isogeometric}
Evans, J.~A., Hughes, T. J.~R., 2013. Isogeometric divergence-conforming
  {B}-splines for the steady {N}avier-{S}tokes equations. Mathematical Models
  and Methods in Applied Sciences 23~(08), 1421--1478.

\bibitem[{Falk and Osborn(1980)}]{falk1980error}
Falk, R.~S., Osborn, J.~E., 1980. Error estimates for mixed methods. RAIRO.
  Analyse num{\'e}rique 14~(3), 249--277.

\bibitem[{Franca and Hughes(1988)}]{franca1988two}
Franca, L.~P., Hughes, T. J.~R., 1988. Two classes of mixed finite element
  methods. Computer Methods in Applied Mechanics and Engineering 69~(1),
  89--129.

\bibitem[{Gatica(2014)}]{gatica2014simple}
Gatica, G.~N., 2014. A simple introduction to the mixed finite element method.
  Theory and Applications. Springer Briefs in Mathematics. Springer, London.

\bibitem[{Guddati and Yue(2004)}]{guddati2004modified}
Guddati, M.~N., Yue, B., 2004. Modified integration rules for reducing
  dispersion error in finite element methods. Computer methods in applied
  mechanics and engineering 193~(3), 275--287.

\bibitem[{Hoang et~al.(2017)Hoang, Verhoosel, Auricchio, van Brummelen, and
  Reali}]{hoang2017mixed}
Hoang, T., Verhoosel, C.~V., Auricchio, F., van Brummelen, E.~H., Reali, A.,
  2017. Mixed isogeometric finite cell methods for the {S}tokes problem.
  Computer Methods in Applied Mechanics and Engineering 316, 400--423.

\bibitem[{Hughes and Wells(2005)}]{hughes2005conservation}
Hughes, T. J.~R., Wells, G.~N., 2005. Conservation properties for the
  {G}alerkin and stabilised forms of the advection-diffusion and incompressible
  {N}avier-{S}tokes equations. Computer Methods in Applied Mechanics and
  Engineering 194~(9-11), 1141--1159.

\bibitem[{Ishihara(1978)}]{ishihara1978mixed}
Ishihara, K., 1978. A mixed finite element method for the biharmonic eigenvalue
  problems of plate bending. Publications of the Research Institute for
  Mathematical Sciences 14~(2), 399--414.

\bibitem[{John et~al.(2017)John, Linke, Merdon, Neilan, and
  Rebholz}]{john2017divergence}
John, V., Linke, A., Merdon, C., Neilan, M., Rebholz, L.~G., 2017. On the
  divergence constraint in mixed finite element methods for incompressible
  flows. SIAM Review 59~(3), 492--544.

\bibitem[{Kechkar and Silvester(1992)}]{kechkar1992analysis}
Kechkar, N., Silvester, D., 1992. Analysis of locally stabilized mixed finite
  element methods for the {S}tokes problem. Mathematics of Computation
  58~(197), 1--10.

\bibitem[{Korzec et~al.(2008)Korzec, Evans, M{\"u}nch, and
  Wagner}]{korzec2008stationary}
Korzec, M.~D., Evans, P.~L., M{\"u}nch, A., Wagner, B., 2008. Stationary
  solutions of driven fourth-and sixth-order {C}ahn-{H}illiard-type equations.
  SIAM Journal on Applied Mathematics 69~(2), 348--374.

\bibitem[{Malkus and Hughes(1978)}]{malkus1978mixed}
Malkus, D.~S., Hughes, T. J.~R., 1978. Mixed finite element methods -- reduced
  and selective integration techniques: a unification of concepts. Computer
  Methods in Applied Mechanics and Engineering 15~(1), 63--81.

\bibitem[{Masud and Hughes(2002)}]{masud2002stabilized}
Masud, A., Hughes, T. J.~R., 2002. A stabilized mixed finite element method for
  {D}arcy flow. Computer methods in applied mechanics and engineering 191~(39),
  4341--4370.

\bibitem[{Odeh and Keller(1964)}]{odeh1964partial}
Odeh, F., Keller, J.~B., 1964. Partial differential equations with periodic
  coefficients and bloch waves in crystals. Journal of Mathematical Physics
  5~(11), 1499--1504.

\bibitem[{Piegl and Tiller(1997)}]{piegl2012nurbs}
Piegl, L., Tiller, W., 1997. The {NURBS} book. Springer Science \& Business
  Media.

\bibitem[{Puzyrev et~al.(2017)Puzyrev, Deng, and Calo}]{puzyrev2017dispersion}
Puzyrev, V., Deng, Q., Calo, V.~M., 2017. Dispersion-optimized quadrature rules
  for isogeometric analysis: modified inner products, their dispersion
  properties, and optimally blended schemes. Computer Methods in Applied
  Mechanics and Engineering 320, 421--443.

\bibitem[{Puzyrev et~al.(2018)Puzyrev, Deng, and Calo}]{puzyrev2017spectral}
Puzyrev, V., Deng, Q., Calo, V.~M., 2018. Spectral approximation properties of
  isogeometric analysis with variable continuity. Computer Methods in Applied
  Mechanics and Engineering 334, 22--39.

\bibitem[{Sarmiento et~al.(2017)Sarmiento, C{\^o}rtes, Garcia, Dalcin, Collier,
  and Calo}]{sarmiento2017petiga}
Sarmiento, A.~F., C{\^o}rtes, A.~M., Garcia, D., Dalcin, L., Collier, N., Calo,
  V.~M., 2017. {PetIGA-MF}: a multi-field high-performance toolbox for
  structure-preserving {B}-splines spaces. Journal of Computational Science 18,
  117--131.

\bibitem[{Savina et~al.(2003)Savina, Golovin, Davis, Nepomnyashchy, and
  Voorhees}]{savina2003faceting}
Savina, T.~V., Golovin, A.~A., Davis, S.~H., Nepomnyashchy, A.~A., Voorhees,
  P.~W., 2003. Faceting of a growing crystal surface by surface diffusion.
  Physical Review E 67~(2), 021606.

\bibitem[{Swift and Hohenberg(1977)}]{swift1977hydrodynamic}
Swift, J., Hohenberg, P.~C., 1977. Hydrodynamic fluctuations at the convective
  instability. Physical Review A 15~(1), 319.

\bibitem[{Timoshenko and Woinowsky-Krieger(1959)}]{timoshenko1959theory}
Timoshenko, S.~P., Woinowsky-Krieger, S., 1959. Theory of plates and shells.
  McGraw-hill.

\bibitem[{Torres et~al.(2013)Torres, Cheng, and Ren}]{torres2013non}
Torres, P., Cheng, Z., Ren, J., 2013. Non-degeneracy and uniqueness of periodic
  solutions for 2n-order differential equations. Discrete Contin. Dyn. Syst.,
  Ser. A 33, 2155--2168.

\bibitem[{Verhoosel et~al.(2011)Verhoosel, Scott, Hughes, and
  De~Borst}]{verhoosel2011isogeometric}
Verhoosel, C.~V., Scott, M.~A., Hughes, T.~J., De~Borst, R., 2011. An
  isogeometric analysis approach to gradient damage models. International
  Journal for Numerical Methods in Engineering 86~(1), 115--134.

\bibitem[{Vignal et~al.(2017)Vignal, Collier, Dalcin, Brown, and
  Calo}]{vignal2017energy}
Vignal, P., Collier, N., Dalcin, L., Brown, D.~L., Calo, V.~M., 2017. An
  energy-stable time-integrator for phase-field models. Computer Methods in
  Applied Mechanics and Engineering 316, 1179--1214.

\bibitem[{Vignal et~al.(2015)Vignal, Dalcin, Brown, Collier, and
  Calo}]{vignal2015energy}
Vignal, P., Dalcin, L., Brown, D.~L., Collier, N., Calo, V.~M., 2015. An
  energy-stable convex splitting for the phase-field crystal equation.
  Computers \& Structures 158, 355--368.

\bibitem[{Wang and Ye(2014)}]{wang2014weak}
Wang, J., Ye, X., 2014. A weak {G}alerkin mixed finite element method for
  second order elliptic problems. Mathematics of Computation 83~(289),
  2101--2126.

\end{thebibliography}

\end{document}